
\documentclass[preprint,12pt]{elsarticle}
\usepackage [utf8]{inputenc}




\usepackage{lmodern}
\usepackage{appendix}
\usepackage{amsmath}
\usepackage{graphicx}
\usepackage[colorlinks=true, allcolors=black]{hyperref}
\usepackage{amssymb}
\usepackage{amsfonts}
\usepackage{amsthm}
\usepackage{colortbl}
\usepackage{dsfont}
\usepackage{bm}
\usepackage{algorithm}
\usepackage{algorithmic}
\usepackage{dsfont}
\usepackage{caption}
\usepackage{subcaption}
\usepackage[table, dvipsnames]{xcolor}
\usepackage{tabularx}
\usepackage{lipsum}
\usepackage{mathtools}
\usepackage{listings}

\newcolumntype{C}{>{\centering\arraybackslash}X}

\newcolumntype{F}[1]{%
    >{\raggedright\arraybackslash\hspace{0pt}}p{#1}}%
\newcolumntype{T}[1]{%
    >{\centering\arraybackslash\hspace{0pt}}p{#1}}%

\newcommand{\probP}{\mathbb{P}}
\newcommand{\espE}{\mathbb{E} }
\newtheorem{theorem}{Theorem}[section]
\newtheorem{lemma}[theorem]{Lemma}
\newtheorem{proposition}[theorem]{Proposition}
\newtheorem{corollary}[theorem]{Corollary}
\newtheorem{definition}[theorem]{Definition}

\journal{Systems and Control Letters}

\newcommand{\uUn}{u}
\newcommand{\uDeux}{h}

\begin{document}

\begin{frontmatter}



\title{A Gradient Descent-Ascent Method for Continuous-Time Risk-Averse Optimal Control
}



\author[Université Paris-Saclay]{Gabriel Velho}

\affiliation[Université Paris-Saclay]{organization={Université Paris-Saclay, CentraleSupélec, CNRS, Laboratoire des signaux et systèmes},
            city={Gif-sur-Yvette},
            country={France}}

\author[Université Paris-Saclay]{Jean Auriol}
\author[Université Paris-Saclay]{Riccardo Bonalli}


\begin{abstract}
In this paper, we consider continuous-time stochastic optimal control problems where the cost is evaluated through a coherent risk measure. We provide an explicit gradient descent-ascent algorithm which applies to problems subject to non-linear stochastic differential equations.
More specifically, we leverage duality properties of coherent risk measures to relax the problem via a smooth min-max reformulation which induces artificial strong concavity in the max subproblem. We then formulate necessary conditions of optimality for this relaxed problem which we leverage to prove convergence of the gradient descent-ascent algorithm to candidate solutions of the original problem. Finally, we showcase the efficiency of our algorithm through numerical simulations involving trajectory tracking problems and highlight the benefit of favoring risk measures over classical expectation.
\end{abstract}



\begin{keyword}
Gradient descent-ascent \sep Coherent risk measures \sep Stochastic optimal control \sep Non-linear control \sep Risk-averse control
\end{keyword}

\end{frontmatter}


\section{Introduction}\label{sec:Introduction}


\quad Risk-averse stochastic optimal control is a powerful tool for designing control laws that feature robustness against uncertainties. Relevant applications of this theory range from safe financial investment to safe control of autonomous systems, as evidenced by the monographs \cite{shapiro_lectures_2009, chapman_risk-sensitive_2022} and references therein. For example, when controlling non-linear stochastic systems in robotics or space applications, robustness against uncertainties is often leveraged to ensure safety during motion. The problem also arises in finance, where, to mitigate unrecoverable losses, an agent may want to invest safe assets uniquely, even in exchange for a smaller average gain.
This is in contrast with ``classical'' stochastic optimal control, which rather aims at minimizing the expectation of a stochastic cost. Although high reward on average might be attained, these \textit{risk-neutral} methods result in optimal controls that may generate trajectories that may cause catastrophic losses with small, though non-zero, probability. As such, these solutions are considered unacceptable in terms of safety in many applications \cite{wang_risk-averse_2022}.

To safely control systems whose dynamics are subject to uncertainties, various methods have been developed.
A first possible approach is robust control, where optimal control problems are directly solved in the worst-case scenarios \cite{kothare_robust_1996}. Although robust control theoretically guarantees resilience to unfavorable situations, it can be over-conservative and does not apply to dynamics with unbounded uncertainties, e.g., stochastic differential equations \cite{wang_risk-averse_2022}. 
A more flexible approach is risk-averse control, which aims at minimizing a suitable risk measure of the stochastic cost, offering a good trade-off between minimizing the expected cost and mitigating catastrophic scenarios. 
Many risk-averse control methods involve discretizing the dynamics before proceeding with optimization~\cite{chapman_risk-sensitive_2022}. For discrete problems, risk-averse Model Predictive Control (MPC) is a popular choice \cite{chow_framework_2014, sopasakis_risk-averse_2019-1}. It operates similarly to standard MPC but optimizes a risk measure instead of the expectation of the cost. 
Continuous-time risk-averse control methods are less common. Although risk-averse analogs of the Hamilton-Jacobi-Bellman (HJB) equation \cite{bonnet_risk-averse_2015} and the Pontryagin Maximum Principle (PMP) \cite{bonalli_first-order_2023} 
have been established, these methods do not directly provide tractable numerical schemes. 

Designing a risk-averse method that scales with many different risk measures is a challenging task. Therefore, the focus has mostly been on developing methods that minimize specific risk measures. A significant hindrance of risk measures is their lack of \textit{time consistency}, meaning that the optimal solution at the current time may not solely depend on the current state's values \cite{shapiro_time_2009}. A class of time-consistent risk measures exists for which dynamic programming can be employed to solve the associated Markov Decision Process (MDP) \cite{ruszczynski_risk-averse_2010, cavus_risk-averse_2014}.
Nevertheless, such methods do not apply to the widely adopted Conditional Value at Risk (CV@R), a risk measure that is not time consistent \cite{shapiro_lectures_2009}. To address this issue, a state-space lifting approach has been proposed in \cite{bauerle_markov_2011}. Although effective, these approaches are limited to discrete-time and finite discrete space MDP. A continuous-time and continuous space approach for the CV@R has been proposed in \cite{miller_optimal_2017}, where a representation of the CV@R is used to re-write the problem into a double minimization that is solved with a gradient descent approach. 
Another important class of risk measures is the family of risk measures that can be expressed as a linear function of the state's probability distribution \cite[Chapter 6]{shapiro_lectures_2009}. To minimize these risk measures, a mean field game approach can be applied. In \cite{pfeiffer_risk-averse_2016} and \cite{laurent_optimality_2020}, the Frank-Wolf algorithm is used to solve the problem, with proof of convergence properties to a necessary optimality condition. Yet, these approaches work with very specific risk measures.

In this paper, we propose a gradient-based method to solve continuous-time, non-convex risk-averse control problems where the corresponding risk measure is only coherent, a fairly general requirement encompassing all the aforementioned risk measures. We will only consider the case of a diffusion independent of the control to keep the computation of gradients tractable. This is a reasonable assumption for applications such as aerospace and robotics, where the diffusion process often models external perturbations (e.g., gusts of wind on a robot), which are then uncorrelated from the control.
The motivation to pursue this research direction came from the fact that gradient descent methods have already seen a lot of success in the more classical setting of average-oriented minimization \cite{archibald_stochastic_2020}, non-stochastic control \cite{konolige_gradient_2000}, as well as some risk-averse methods \cite{miller_optimal_2017, laurent_optimality_2020, garreis_interior-point_2021}.
Minimizing directly the risk measure is challenging. We, therefore, leverage the duality properties of coherent risk measures to transform the minimization problem into a min-max problem of a smooth, non-convex/linear function. Min-max optimization has been widely studied in the convex-concave setting \cite{nedic_subgradient_2009}. Still, recent applications to machine learning  (particularly Generative Adversarial Networks) have pushed studies toward the non-convex / strongly concave setting \cite{mokhtari_unified_2020}. 
The gradient descent-ascent method has proven to be a successful technique for addressing these problems in finite-dimensional spaces. In \cite{lin_gradient_2020}, the proposed algorithm reaches an $\epsilon$ distance from a critical point within a finite time frame. In \cite{chen_proximal_2021}, the algorithm's convergence to the critical point is guaranteed, given a specific geometric property of the cost function.

Our contribution consists of applying gradient descent ascent to solve the min-max problem, which stems from dual representations of coherent risk measures, by extending the work in \cite{chen_proximal_2021}. This is not an easy task in that our problem is not smooth nor strongly concave. To address such an issue, we propose to apply gradient descent-ascent to a modified version of the problem which is smooth, non-convex / strongly concave, and arbitrarily ``close'' to the original formulation. Via appropriate proofs of convergence, we then show that the gradient of the cost converges to 0. By requiring more regularity in the dynamics of the system, we can also prove that the sequence of controls generated by our algorithm converges (in an appropriate sense) to the set of critical points of the original problem. We finally implement the algorithm on a realistic robotic application to showcase the efficiency of our easy-to-use method.

The paper is organized as follows. In Section \ref{sec:Preliminaries}, we begin by providing a brief summary of some basic concepts of stochastic calculus and risk measures. Section \ref{sec:Formulation} outlines the specific settings of dynamics and control strategies under consideration. Next, we present in Section \ref{sec:Algo} our algorithm, providing details for the computation of the gradient. In Section \ref{sec:Convergence}, we prove various convergence properties under different assumptions on the dynamics. Finally, we analyze in Section \ref{sec:Numerical} numerical convergence, and we showcase the effectiveness of our approach through a series of simulations.


\section{Notations and Preliminary Results} \label{sec:Preliminaries}

In this section, we briefly present some basic concepts and results of stochastic calculus we need in our work. More details can be found in \cite{le_gall_brownian_2016} and \cite[Chapter 1.6]{yong_stochastic_1999}. From now on, we fix $n,m,d \in \mathbb{N}$, a finite time horizon $T>0$, and $p \in [1,+\infty)$. 
Let $E$ be a vector space. We denote $E^*$ its dual.
As a result of Riesz theorem, if $E$ is also a Hilbert space, then it is isomorphic to its dual space $E^*$. Consequently, throughout the paper, elements in $(L^2)^*$ are implicitly assumed as elements in $L^2$.
We denote $\Vert \cdot \Vert$ as the Euclidian norm. Other norms will be properly introduced as they are needed.
We consider random variables in a probability space $(\Omega, \mathcal{G}, \probP)$. For any sub-sigma algebra $\mathcal{S} \subset \mathcal{G}$, we denote by $L^p_\mathcal{S}(\Omega,\mathbb{R}^n)$ the Banach space of random variables $z:\Omega \rightarrow \mathbb{R}^n$ that are $\mathcal{S}$-measurable and satisfy: 
\begin{equation*}
    \Vert z \Vert_{L^p} \triangleq \espE \left[  \Vert z \Vert^p \right]^{1/p} < \infty.
\end{equation*}
It is a standard consequence of Riesz theorem that $L^p_\mathcal{S}(\Omega,\mathbb{R}^n)^* \cong L^q_\mathcal{S}(\Omega,\mathbb{R}^n)$, with $q$ the conjugate exponent of $p$. 
Let $(W_t)_{t \in [0,T]}$ be a $d$-dimensional Wiener process, generating a filtration
\begin{equation*}
    \mathcal{F} \triangleq (\mathcal{F}_t)_{t \in [0,T]} = ( \sigma( W_s , 0 \leq s \leq t )  )_{t \in [0,T]} .
\end{equation*}
We denote by $L^p_\mathcal{F}([0,T] \times \Omega,\mathbb{R}^n)$ the Banach space of processes $x : [0,T] \times \Omega \rightarrow \mathbb{R}^n$ which are progressive with respect to $\mathcal{F}$ 
and such that:
\begin{equation*}
    \Vert x \Vert_{L^p_\mathcal{F}} \triangleq \espE \left[ \int_0^T \Vert x \Vert^p(t) dt \right]^{1/p} < \infty.
\end{equation*}
In addition, we denote by $C^p_\mathcal{F}([0,T] \times \Omega, \mathbb{R}^n)$ the Banach space of $\mathcal{F}$-adapted processes $x$ which have continuous sample paths and finite sup norm
\begin{equation*}
    \Vert x \Vert_{C^p_\mathcal{F}} \triangleq \espE \left[ \underset{0 \leq t \leq T}{ \sup }  \Vert x \Vert ^p(t) \right]^{1/p} < \infty.
\end{equation*}
In particular, $C^p_\mathcal{F}([0,T] \times \Omega,\mathbb{R}^n) \subset L^p_\mathcal{F}([0,T] \times \Omega,\mathbb{R}^n)$.
In what follows, we often use the notation $x(t)$, $t \in [0,T]$ to denote progressive processes.
A $\mathcal{F}$-adapted process $x : [0,T] \times \Omega \rightarrow \mathbb{R}^n$ such that $x(t) \in L^1_{\mathcal{F}_T}$ for every $t \in [0,T]$ is called a martingale provided that 
\begin{equation*}
    \espE[x(t)|\mathcal{F}_s] = x(s),
\end{equation*}
for all $0 \leq s \leq t \leq T$. We then say that a martingale $x : [0,T] \times \Omega \rightarrow \mathbb{R}^n$ is uniformly bounded in $L^p_\mathcal{F}([0,T] \times \Omega,\mathbb{R}^n)$ if there exists a constant $C > 0$ such that 
\begin{equation*}
   \underset{t \in [0,T]}{\textrm{sup}}  \Vert x(t) \Vert _{L^p}  \leq C.
\end{equation*}
For every $x = (x^1 | \dots |x^d) \in L^p_\mathcal{F}([0,T] \times \Omega,\mathbb{R}^{n\times d})$, we write
\begin{equation*}
    y(t) = \int_0^T x(t) dW_t \triangleq \sum_{i = 1}^d \int_0^T x^i(t) dW^i_t,
\end{equation*}
for the Itô integral of $x$ with respect to $W$, and recall that $y$ is then a martingale in $C^p_\mathcal{F}([0,T] \times \Omega,\mathbb{R}^n)$. We also recall the \textit{Burkholder-Davis-Gundy} inequality for stochastic integrals:
\begin{equation}\label{eq:Burkhloder_Davis_Gundy_ineq_stoch_int}
    \espE \left[  \underset{0 \leq t \leq T}{\textrm{\normalfont sup}}  \Vert y(t) \Vert ^p  \right] \leq C_p \espE \left[ \left( \int_0^T  \Vert x(t) \Vert ^2 dt \right)^{p/2} \right],
\end{equation}
where $C_p$ is a constant which depends on $p$ uniquely.

As outlined in the introduction, our work is centered on controlling the extreme values or tail of the distribution of a stochastic cost rather than controlling its expectation uniquely. To estimate these extreme values, we leverage risk measures. Unlike expectation, risk measures are not always linear or Fréchet-differentiable. However, they possess several interesting properties we discuss below. We refer to \cite[Chapter 6]{shapiro_lectures_2009} for all the results presented below on risk measures.
\begin{definition} \label{def:risk}
A finite risk measure $\rho$ is a mapping from a space $\mathcal{Z} = L^p(\Omega, \mathbb{R})$ with $p \in [1, +\infty)$ to $\mathbb{R}$. A finite risk measure is said to be \textit{coherent} if it verifies the following properties:

\vspace{1em}
Convexity:
\begin{equation*}\label{eq:coherente_convexe}
\forall (Z,Z') \in \mathcal{Z} \quad \forall \lambda \in [0,1], \quad  \rho(\lambda Z + (1-\lambda)Z') \leq \lambda \rho(Z) + (1-\lambda)\rho(Z'). 
\end{equation*}

Monotonicity\footnote{for $(Z,Z') \in \mathcal{Z}$, it is said that $Z \leq Z'$ if $\probP(\{ \omega : Z(\omega) \leq Z'(\omega) \}) = 1$.}: 
\begin{equation*}\label{eq:coherente_monotone} 
\forall (Z,Z') \in \mathcal{Z}, \quad Z \leq Z' \Rightarrow  \rho(Z) \leq \rho(Z').
\end{equation*}

Translation equivariance: 
\begin{equation*}\label{eq:coherente_translation}
\forall Z \in \mathcal{Z}, \quad \forall a \in \mathbb{R}, \quad \rho(Z + a) = \rho(Z) + a.
\end{equation*}

Positive homogeneity: 
\begin{equation*}\label{eq:coherente_homogene}
\forall Z \in \mathcal{Z}, \quad \forall t \in \mathbb{R}^+, \quad \rho(tZ) = t\rho(Z).
\end{equation*}
\end{definition}
Definition \ref{def:risk} yields some interesting properties that are used throughout our work.

\begin{theorem}[\cite{shapiro_lectures_2009}] \label{thm:coherent_risk_measure_main_properties}
A finite coherent risk measure $\rho: \mathcal{Z} \rightarrow \mathbb{R}$ verifies the following properties:
\begin{enumerate}
    \item The convex subdifferential of $\rho$ at 0, denoted by $\partial \rho(0)$, is a nonempty closed and bounded subset of $\mathcal{Z}^*$. It is, in particular, weakly-* compact.
    \item For every $Z \in \mathcal{Z}$, the risk measure can be represented as 
\begin{equation*}
    \rho(Z) = \underset{\zeta \in \partial \rho(0)}{\max} \ \espE[\zeta Z].
\end{equation*}
    \item For every $Z \in \mathcal{Z}$, $\partial \rho(Z)$ is a nonempty closed and bounded subset of $\mathcal{Z}^*$, and it can be expressed as
\begin{equation*}
     \partial \rho(Z) = \underset{\zeta \in \partial \rho(0)}{\arg \max} \ \espE[\zeta Z].
\end{equation*}
    \item For every $Z, H \in \mathcal{Z}$, the mapping $\rho$ is Hadamard directionally (sub-) differentiable at $Z$ along $H$, and the differential can be expressed as
\begin{equation*}
     \rho'(Z,H) = \underset{\zeta \in \partial \rho(Z)}{\max} \ \espE[\zeta H].
\end{equation*}
\end{enumerate}
\end{theorem}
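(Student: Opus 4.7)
The plan is to rely on classical convex analysis on Banach spaces, exploiting that $\rho$ is by assumption convex, positively homogeneous (hence sublinear), monotone and translation equivariant. First I would establish that $\rho$ is Lipschitz continuous on $\mathcal{Z}=L^p(\Omega,\mathbb{R})$. Monotonicity combined with translation equivariance gives $\rho(Z)\le\rho(Z')+\|Z-Z'\|_\infty$ whenever the essential supremum is finite, and one extends this bound to an $L^p$-Lipschitz estimate by the fact that any finite convex function on a Banach space is continuous on the interior of its domain, which here is the whole of $\mathcal{Z}$. This continuity step is what I expect to be the main technical obstacle: coherence is stated only with the order and the translation structure, while continuity must be proved in the $L^p$ topology, so some care is needed to combine local boundedness of convex functions with monotonicity to bypass counterexamples that appear for non-finite risk measures.

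With Lipschitz continuity of $\rho$ in hand, Property~1 follows from the Moreau--Rockafellar theorem: the subdifferential of a proper convex function that is continuous at a point is nonempty, convex, weakly-* closed and locally bounded there. Boundedness of $\partial\rho(0)$ in $\mathcal{Z}^*$ is then reinforced globally by positive homogeneity (a continuous sublinear function has a globally bounded subdifferential at the origin), and the Banach--Alaoglu theorem upgrades weak-* closedness and boundedness to weak-* compactness.

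For Property~2, sublinearity together with continuity identifies $\rho$ with the support function of $\partial\rho(0)$, so that
\begin{equation*}
\rho(Z)=\sup_{\zeta\in\partial\rho(0)}\espE[\zeta Z].
\end{equation*}
The supremum is in fact a maximum because $\partial\rho(0)$ is weakly-* compact and the map $\zeta\mapsto\espE[\zeta Z]$ is weakly-* continuous. For Property~3, I would use the standard characterization of subgradients of sublinear functions: $\zeta\in\partial\rho(Z)$ if and only if $\zeta\in\partial\rho(0)$ and $\rho(Z)=\espE[\zeta Z]$. One direction is obtained by testing the subgradient inequality at $Z'=tZ$ for $t\ge 0$ (using positive homogeneity) and at $Z'=0$; the converse combines the support-function representation from Property~2 with the defining subgradient inequality. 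This immediately gives $\partial\rho(Z)=\arg\max_{\zeta\in\partial\rho(0)}\espE[\zeta Z]$, and nonemptiness, closedness and boundedness of $\partial\rho(Z)$ are inherited from $\partial\rho(0)$.

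Finally, for Property~4, the one-sided directional derivative $\rho'(Z,H)$ exists for any convex $\rho$ as the decreasing limit of difference quotients, and convexity plus continuity at $Z$ yield the max-formula $\rho'(Z,H)=\max_{\zeta\in\partial\rho(Z)}\espE[\zeta H]$ by the usual Fenchel--Moreau argument. Upgrading ordinary directional differentiability to the Hadamard sense amounts to checking that the difference quotients converge uniformly along sequences $H_n\to H$, which in turn follows from the Lipschitz property established in the first step, thereby closing the argument.
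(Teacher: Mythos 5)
The paper itself contains no proof of this theorem: it is imported verbatim from \cite[Chapter 6]{shapiro_lectures_2009}, so the only meaningful comparison is with the argument in that reference, which your outline essentially reproduces --- continuity of $\rho$ first, then Moreau--Rockafellar and Banach--Alaoglu for Property 1, the support-function representation of a continuous sublinear functional for Property 2, the standard characterization of subgradients of positively homogeneous convex functions for Property 3, and the max-formula together with local Lipschitzianity to upgrade to Hadamard directional differentiability for Property 4. Those four steps are correct as sketched.

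The one genuine gap sits exactly where you suspect it does. The principle you invoke, that ``any finite convex function on a Banach space is continuous on the interior of its domain,'' is false in infinite dimensions: a discontinuous linear functional on $L^p$ is convex and finite everywhere yet nowhere continuous. Finiteness and convexity alone therefore cannot deliver continuity of $\rho$, and the bound $\rho(Z)\le\rho(Z')+\Vert Z-Z'\Vert_{L^\infty}$ does not propagate to an $L^p$ estimate by any soft argument. What actually closes the step (Proposition 6.6 of the cited reference) is monotonicity combined with the lattice structure of $L^p$: if $\rho$ were unbounded above on every neighborhood of some $Z_0$, pick $Z_k\to Z_0$ with $\rho(Z_k)\ge k$ and $\Vert Z_k-Z_0\Vert_{L^p}\le 2^{-k}$, set $Z:=Z_0+\sum_k\vert Z_k-Z_0\vert$, which converges in $\mathcal{Z}$; since $Z\ge Z_k$ almost surely for every $k$, monotonicity forces $\rho(Z)\ge\rho(Z_k)\ge k$ for all $k$, contradicting finiteness. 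Local boundedness above plus convexity then yields local Lipschitz continuity. You do gesture at ``combining local boundedness with monotonicity,'' but the false general principle must be replaced by this explicit construction --- it is the only point in the entire theorem where monotonicity is actually used. Once continuity is secured, the rest of your argument goes through as written.
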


The most commonly adopted risk measure is the \textit{Conditional-Value-at-Risk}, denoted by CV@R, which we will regularly consider in our examples. Intuitively, CV@R represents the average of the highest values that a random variable can attain. Consequently, it is a highly valuable metric for assessing potential losses in a worst-case scenario.
\begin{definition}
For $Z$ in $L^2(\Omega, \mathbb{R})$ and $\alpha \in (0,1]$, the Conditional-Value-at-Risk of parameter $\alpha$ of $Z$ is given by:

\begin{equation*}\label{eq:conditional_value_at_risk}
\textrm{CV@R}_\alpha[Z] = \inf \ \left\{ t + \frac{\espE[\max(0, Z-t)]}{\alpha}   : t \in \mathbb{R} \right\}.
\end{equation*}
\end{definition}
The parameter $\alpha$ sets up the number of ``worst-case scenarios''. For instance, setting $\alpha = 0.05$ means that $\textrm{CV@R}_\alpha[Z]$ corresponds to the average of the top $5\%$ values that $Z$ can attain. Note that the subdifferential at 0 of the \textit{Conditional-Value-at-Risk}  can be explicitly calculated, which is extremely helpful for numerical simulations.
\begin{proposition}\label{prop:computation_CVAR_set} 
The subdifferential of $\textrm{CV@R}_\alpha$ at 0 is given by:

\begin{equation}\label{eq:computation_CVAR_set}
\partial(\textrm{CV@R}_\alpha)(0) = \{ \zeta \in L^2(\Omega, \mathbb{R}) : \espE[\zeta] = 1, \ \zeta \in [0,\alpha^{-1}] \ \textrm{a.s.} \} .
\end{equation}

\end{proposition}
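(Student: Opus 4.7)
The plan is to prove the two inclusions $\mathcal{A} \subset \partial(\textrm{CV@R}_\alpha)(0)$ and $\partial(\textrm{CV@R}_\alpha)(0) \subset \mathcal{A}$, where $\mathcal{A}$ denotes the set on the right-hand side of \eqref{eq:computation_CVAR_set}. Choosing $t=0$ in the infimum defining $\textrm{CV@R}_\alpha[0]$ immediately gives $\textrm{CV@R}_\alpha[0] = 0$, so the convex subdifferential inequality reduces to the characterization $\zeta \in \partial(\textrm{CV@R}_\alpha)(0)$ if and only if $\espE[\zeta Z] \leq \textrm{CV@R}_\alpha[Z]$ for every $Z \in L^2(\Omega,\mathbb{R})$.

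For the forward inclusion, I will take any $\zeta$ with $\espE[\zeta]=1$ and $0 \leq \zeta \leq 1/\alpha$ a.s., and for every $t \in \mathbb{R}$ exploit the decomposition $Z-t = (Z-t)_+ - (Z-t)_-$. Using $\zeta \leq 1/\alpha$ on the positive part and $\zeta \geq 0$ on the negative part,
\begin{equation*}
\espE[\zeta Z] = t + \espE[\zeta(Z-t)_+] - \espE[\zeta(Z-t)_-] \leq t + \frac{1}{\alpha}\espE[(Z-t)_+].
\end{equation*}
Taking the infimum over $t$ delivers $\espE[\zeta Z] \leq \textrm{CV@R}_\alpha[Z]$, hence $\zeta \in \partial(\textrm{CV@R}_\alpha)(0)$.

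For the reverse inclusion, the strategy is to strengthen the previous bound into the dual representation $\textrm{CV@R}_\alpha[Z] = \max_{\zeta \in \mathcal{A}} \espE[\zeta Z]$, and then combine it with Theorem \ref{thm:coherent_risk_measure_main_properties}(2). In view of the forward inclusion, only a matching maximizer is missing. I will let $t^\star$ be a $(1-\alpha)$-quantile of $Z$ (equivalently, a minimizer in the infimum defining CV@R) and set
\begin{equation*}
\zeta^\star = \frac{1}{\alpha}\mathds{1}_{\{Z > t^\star\}} + \gamma\,\mathds{1}_{\{Z = t^\star\}},
\end{equation*}
where $\gamma$ is uniquely determined by $\espE[\zeta^\star]=1$. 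The quantile inequalities $\probP(Z > t^\star) \leq \alpha \leq \probP(Z \geq t^\star)$ force $\gamma \in [0, 1/\alpha]$, so $\zeta^\star \in \mathcal{A}$, while a direct computation yields $\espE[\zeta^\star Z] = t^\star + \frac{1}{\alpha}\espE[(Z-t^\star)_+] = \textrm{CV@R}_\alpha[Z]$. Since $\mathcal{A}$ and $\partial(\textrm{CV@R}_\alpha)(0)$ then share the same support function on $L^2$ and both are convex and weakly-$*$ closed and bounded in $L^2$, they must coincide.

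The main obstacle is the construction of $\zeta^\star$ when the law of $Z$ has a nontrivial atom at the quantile $t^\star$: the mixing weight $\gamma$ must be finely tuned to ensure simultaneously $\espE[\zeta^\star]=1$ and $\zeta^\star \leq 1/\alpha$ a.s. This is precisely what the defining inequalities of the $(1-\alpha)$-quantile encode, and the atomless case, in which $\gamma$ is irrelevant, is immediate.
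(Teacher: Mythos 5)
The paper itself does not prove Proposition~\ref{prop:computation_CVAR_set}: it is quoted as a known result, with the blanket reference to \cite{shapiro_lectures_2009} given just before Theorem~\ref{thm:coherent_risk_measure_main_properties}, so there is no in-paper argument to compare against. Your proof is correct and is essentially the standard Rockafellar--Uryasev/dual-representation argument. The forward inclusion via $Z-t=(Z-t)_+-(Z-t)_-$ is airtight, the quantile construction of $\zeta^\star$ (including the verification $\gamma\in[0,\alpha^{-1}]$ from $\probP(Z>t^\star)\le\alpha\le\probP(Z\ge t^\star)$) is right, and concluding equality of $\mathcal{A}$ and $\partial(\textrm{CV@R}_\alpha)(0)$ from equality of their support functions is legitimate since both sets are convex, bounded and closed (hence weakly closed) in $L^2$. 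Two points you assert without proof are worth flagging, though neither is a real gap: (i) that the infimum defining $\textrm{CV@R}_\alpha[Z]$ is attained precisely at the $(1-\alpha)$-quantiles --- this follows from the subdifferential $1-\alpha^{-1}\,[\probP(Z>t),\probP(Z\ge t)]$ of the convex map $t\mapsto t+\alpha^{-1}\espE[(Z-t)_+]$ and should at least be cited; (ii) the boundary case $\alpha=1$, where a suitable $t^\star$ may fail to exist for $Z$ unbounded below, but where the statement degenerates to $\partial\,\espE(0)=\{1\}$ and is immediate since $\espE[\zeta]=1$ and $\zeta\le 1$ a.s.\ force $\zeta=1$ a.s.
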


\section{Problem Formulation}\label{sec:Formulation}

In what follows, we detail the settings in which we study the controlled stochastic dynamics. We then devise the optimization problem and present some necessary conditions for optimality an optimal solution must satisfy. As already mentioned, we focus on two different stochastic dynamics: 
\begin{enumerate}
    \item \textbf{Setting A}. A general non-linear stochastic system with no control term in the diffusion, which is coherent with our framework of modeling exterior perturbations affecting a mechanical system. The proposed algorithm can be run when considering such dynamics, although only partial theoretical guarantees of convergence may be derived. Nevertheless, through numerical simulations, we show that reliable solutions are often generated.
    \item \textbf{Setting B}. A control affine stochastic system with no control term in the diffusion. This system is a particular case of the general dynamics which are often encountered in many applications, such as aerospace and robotics \cite{trelat_controoptimal_2005, bonalli_solving_2017, bonalli_optimal_2020, bonalli_optimal_2018}. Importantly, under such dynamics, it is possible to theoretically prove the convergence of our algorithm.
\end{enumerate}

From now on, we assume state variables take values in $\mathbb{R}^n$, whereas control variables take values in $\mathbb{R}^m$ 
. For the sake of clarity in the exposition and without loss of generality, from now on, we assume the dynamics are perturbed through a weighted one-dimensional Wiener process $W$, as below.

\subsection{Setting A: The general non-linear control system}
In this general setting, we consider non-linear stochastic dynamics in the form 
\begin{equation}\label{eq:base_system_nonlinear}
\left\{
    \begin{array}{ll}
    dX_u(t) &= \hspace{1em} b(t, X_u(t), u(t)) dt + \sigma(t, X_u(t)) dW_t, \\
    X_u(0) &= \hspace{1em} X_0 ,
    \end{array}
    \right.
\end{equation}
where $X_0 \in \mathbb{R}^n$ is a deterministic initial condition (note that extending our results where the initial condition is a random variable is straightforward). In particular, we consider diffusion terms $\sigma$ independent from the control input. This is generally the case in many applications such as aerospace and robotics \cite{bonalli_sequential_2022}. We also consider the following classical assumptions on the mappings $b : [0,T] \times \mathbb{R}^n \times \mathbb{R}^m \to \mathbb{R}^n$ and $\sigma : [0,T] \times \mathbb{R}^n \to \mathbb{R}^n$ to hold true:
\begin{enumerate}
    \item The mappings 
    \begin{equation*}
        b(\cdot,x,u) : [0,T] \rightarrow \mathbb{R}^n  \quad   \sigma(\cdot,x) : [0,T] \rightarrow \mathbb{R}^n
    \end{equation*}
     are continuous for every $(x,u) \in \mathbb{R}^n \times \mathbb{R}^m$. 
    \item For all $t$ in [0,T], the mappings 
    \begin{equation*}
        b(t,\cdot,\cdot) : \mathbb{R}^n \times \mathbb{R}^m  \rightarrow \mathbb{R}^n  \quad   \sigma(t,\cdot) : \mathbb{R}^n \rightarrow \mathbb{R}^n
    \end{equation*}
    are differentiable with bounded $L$-Lipschitz gradient. That is, there exists a constant $L > 0$ such that
    \begin{equation*}
    \begin{split}
        & \left\|\frac{\partial b}{\partial x}(t, x, u)\right\| + \left\|\frac{\partial b}{\partial u}(t, x, u)\right\| + \left\|\frac{\partial \sigma}{\partial x}(t, x)\right\| \leq L, \\
        & \left\|\frac{\partial b}{\partial x}(t, x, u) - \frac{\partial b}{\partial x}(t, y, v)\right\|  \leq L (|x-y| + |u-v|), \\
        & \left\|\frac{\partial b}{\partial u}(t, x, u) - \frac{\partial b}{\partial u}(t, y, v)\right\|  \leq L (|x-y| + |u-v|), \\
        & \left\|\frac{\partial \sigma}{\partial x}(t, x) - \frac{\partial \sigma}{\partial x}(t, y)\right\| \leq L |x-y|.
    \end{split}
    \end{equation*}
\end{enumerate}
In particular, given a progressive control input $u \in L^2_\mathcal{F}([0,T] \times \Omega,\mathbb{R}^m)$, the assumptions above guarantee the existence and uniqueness of a solution $X_u \in C^2_\mathcal{F}([0,T] \times \Omega,\mathbb{R}^n)$ to \eqref{eq:base_system_nonlinear}, see, e.g., \cite[Chapter 1.6]{yong_stochastic_1999}. The additional assumption on Lipschitz gradients, in particular, will be required to compute the Fréchet derivative of $X_u$ with respect to $u$ (see Section \ref{sec:gradient}). 


We consider cost functions of the form
\begin{equation*}\label{eq:stochastic_control_cost_framework_A_general_non_linear}
    J(u) \triangleq \int_0^T f(t,X_u(t), u(t)) dt + g(T, X_u(T)) , \quad \textrm{with $X_u$ solution of \eqref{eq:base_system_nonlinear},}
\end{equation*}
where $f$ and $g$ are positive continuous functions, such that $f(t,\cdot,\cdot)$ and $g(t,.)$ are $C^1$ with Lipschitz gradient in the state and control variables for all $t$ in $[0,T]$. 
Note that these functions cover a variety of costs that are commonly used in many applications. For instance, in trajectory tracking $f$ consists of the quadratic difference between a reference trajectory and the output trajectory \cite{trelat_controoptimal_2005}.

\subsection{Setting B: control affine non-linear systems}

In this setting, we consider the particular case of dynamics of the form
\begin{equation}\label{eq:affine_system_nonlinear}
\left\{
    \begin{array}{ll}
    dX_u(t) &= \hspace{1em} \left[ b_0(t, X_u(t)) + \sum_{i=0}^m  u_i(t) b_i(t, X_u(t))   \right] dt + \sigma(t, X_u(t)) dW_t, \\
    X_u(0) &= \hspace{1em} X_0 ,
    \end{array}
    \right.
\end{equation}
where the mappings $b_i : [0,T] \times \mathbb{R}^n \to \mathbb{R}^n$ and $\sigma : [0,T] \times \mathbb{R}^n \to \mathbb{R}^n$ are $C^1$ with Lipschitz gradient in the state variables for all $t$ in $[0,T]$. Again, these assumptions ensure existence and uniqueness of $X_u \in C^2_\mathcal{F}([0,T] \times \Omega,\mathbb{R}^n)$ solution to \eqref{eq:affine_system_nonlinear}. We additionally require the functions $b_i$ to be uniformly bounded, which still allows for a wide variety of dynamics. Finally, for this setting, we consider slightly less generic stochastic cost functions of the form
\begin{align*}
    &J(u) \triangleq C_u  \Vert u \Vert^2_{\mathcal{U}} + \int_0^T f(t,X_u(t)) dt + g(T, X_u(T)) , \\
    &\hspace{50ex}\textrm{with $X_u$ solution of \eqref{eq:affine_system_nonlinear}} ,
\end{align*}
where $f$ and $g$ follow the same assumptions as above. Additionally, we require that $f$ and $g$ satisfy
$$
\left\| \frac{\partial (f,g)}{\partial x}(t,x) - \frac{\partial (f,g)}{\partial x}(t,y) \right\| \le L \| x - y \| , \quad \left\| \frac{\partial (f,g)}{\partial x}(t,0) \right\| \le L ,
$$
for some constant $L > 0$. 
These assumptions, for instance, enable the use of quadratic functionals. 
Note that such cost functions are still relevant for many applications, including trajectory tracking-type problems \cite{trelat_controoptimal_2005}.

\subsection{Optimization problem}

We seek optimal controls in the space $\mathcal{U} = L^2([0,T] , \mathbb{R}^m) \times \mathbb{R}^{q}$ for some fixed $q \in \mathbb{N}$, which parametrizes feedback controls in $L^2_\mathcal{F}( [0,T] \times \Omega , \mathbb{R}^m )$. More specifically, to each control parameter $u = (v, \lambda) \in \mathcal{U}$ we associate a feedback control in the following manner:
\begin{equation}\label{eq:markovian_control_feedback_formula_definition}
    u_{v, \lambda}(t,x) = v(t) + K_\lambda(x),
\end{equation}
where $K_\lambda : \mathbb{R}^n \rightarrow \mathbb{R}^m$ is a vector-valued function that is smoothly parametrized by the vector $\lambda$. We assume that 
\begin{equation}\label{eq:hypotheses_borne_feedback_K_lambda_assumption}
\begin{split}
\forall x, y \in \mathbb{R}^n, \forall \lambda \in \mathbb{R}^q, \quad & \Vert K_\lambda(x) - K_\lambda(y) \Vert \leq C (1+ \Vert \lambda \Vert) \Vert x - y \Vert \\
\forall x \in \mathbb{R}^n, \forall \lambda , \mu \in \mathbb{R}^q, \quad & \Vert K_\lambda(x) - K_\mu(x) \Vert \leq C (1+ \Vert x \Vert) \Vert \lambda - \mu \Vert.
\end{split}
\end{equation} 
We also assume the function $(x,\lambda) \mapsto K_\lambda(x)$ has a lipschitz gradient, meaning
\begin{equation}\label{eq:lipschitzianity_of_feedback_K}
\Vert \nabla_{\lambda,x} K_\lambda(x) - \nabla_{\lambda,x} K_\mu(y) \Vert \leq L ( \Vert x- y \Vert + \Vert \lambda - \mu \Vert ) 
\end{equation}
for some constant $L>0$.
In our implementations, we use a feedback law that is linear in $\lambda$ and $x$, which therefore automatically verifies both assumptions, but more generic choices are clearly possible. 
We denote by $X_{v,\lambda} \in C^2_{\mathcal{F}}([0,T]\times\Omega,\mathbb{R}^n)$ the unique solution to \eqref{eq:base_system_nonlinear} (or \eqref{eq:affine_system_nonlinear}) which is generated by the control $u_{v, \lambda}$. These control laws will be composed of a deterministic term to steer the trajectory and a stochastic term to compensate for the diffusion. This choice of controls is motivated by the fact that feedback-type controls are easy to implement and are widely used in practice for risk-averse control \cite{wang_risk-averse_2022}\footnote{Note that in this work, we assume that the state variables are completely observable, leaving the investigation of a more general setting as a future research direction.}. One easily verifies that under \eqref{eq:hypotheses_borne_feedback_K_lambda_assumption}, the controlled systems under these controls still satisfy the assumptions of Setting A or Setting B. In what follows, we endow the space $\mathcal{U}$ with the scalar product 
$$
\langle (v_1, \lambda_1) , (v_2, \lambda_2) \rangle_\mathcal{U} \ \triangleq \int_0^T v_1(t) \cdot v_2(t) dt + \lambda_1 \cdot \lambda_2.
$$
From now on, since clear from the context we equivalently denote $u = (v,\lambda) = u_{v,\lambda}$ and $X_u = X_{v,\lambda} = X_{u_{v,\lambda}}$ for simplicity. 

Let us now state and reformulate our optimal control problem as an infinite-dimensional min-max optimization problem using the coherent measure properties described in Theorem \ref{thm:coherent_risk_measure_main_properties}. 
In particular, let $\rho$ be a finite coherent risk measure defined on $L^1_{\mathcal{F}_T}(\Omega , \mathbb{R})$. The main objective of this paper is to find a local optimum of the problem
\begin{equation}\label{eq:general_OCP}
\underset{u \in \mathcal{U}}{\min} \ \rho(J(u)) .
\end{equation}
This problem is not easily solvable as $\rho$ is not always Fréchet differentiable. Given the properties of coherent risk measures we listed previously, we can rewrite \eqref{eq:general_OCP} via the following min-max formulation:
\begin{equation}\label{eq:general_OCP_minmax}
\underset{u \in \mathcal{U}}{\min} \underset{\zeta \in \partial \rho(0)}{\max} \ \espE[ \zeta J(u) ].
\end{equation}
Let us remark that $\partial \rho(0)$ is a subset of $L^{\infty}_{\mathcal{F}_T}(\Omega , \mathbb{R}) \subset  L^{2}_{\mathcal{F}_T}(\Omega , \mathbb{R})$. 
Using this formulation, we can leverage saddle point algorithms to solve problem \eqref{eq:general_OCP}. Saddle point algorithms have been extensively investigated for min-max optimization, motivating our approach \cite{nedic_subgradient_2009, mokhtari_unified_2020, lin_gradient_2020, chen_proximal_2021}. 

Before introducing our method to solve \eqref{eq:general_OCP_minmax} (and therefore \eqref{eq:general_OCP}), we characterize its optimal solutions through appropriate necessary conditions for optimality.

\subsection{Necessary conditions for optimality}

In what follows, we assume our problem is in the most general Setting A. As of common use in min-max optimization settings, we propose to solve Problem \eqref{eq:general_OCP_minmax} by finding extremal points \cite{benzi_numerical_2005}. 
For this, let us first introduce appropriate first-order necessary conditions for optimality which are satisfied by any optimal solution $u^*$ to \eqref{eq:general_OCP}. These conditions involve the differential of the cost function $J$ that we first need to establish clearly.    
\begin{lemma}\label{lem:frechet_differentiability_of_cost_J}
The cost function $J : \mathcal{U} \rightarrow  L^2_{\mathcal{F}_T}$ is Fréchet differentiable. Moreover, let $\omega \in \Omega$ and $u \in \mathcal{U}$. Thanks to the Riesz representation theorem in the Hilbert space $\mathcal{U}$, we can represent the differential $D_u J(u,\omega)$ by an element of $\mathcal{U}$ denoted $\nabla J(u,\omega)$ such that
$$
\forall h \in \mathcal{U}, \quad D_u J(u,\omega) \cdot h = \langle \nabla J(u,\omega) , h \rangle_\mathcal{U}.
$$
\end{lemma}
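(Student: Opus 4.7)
The plan is to prove Fréchet differentiability in three steps: first establish that the solution map $u \mapsto X_u$ is Fréchet differentiable from $\mathcal{U}$ into $C^2_\mathcal{F}([0,T]\times\Omega,\mathbb{R}^n)$; then apply the chain rule to deduce differentiability of $J$ into $L^2_{\mathcal{F}_T}$; and finally invoke the Riesz representation theorem pathwise to produce $\nabla J(u,\omega)$.

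For the first step, I fix $u=(v,\lambda) \in \mathcal{U}$ and a perturbation $h=(\delta v, \delta \lambda) \in \mathcal{U}$, and introduce the candidate derivative $Y_u[h]$ as the unique solution of the linear variational SDE obtained by formally differentiating \eqref{eq:base_system_nonlinear} along the feedback structure \eqref{eq:markovian_control_feedback_formula_definition}. Its coefficients involve $\partial_x b$, $\partial_u b$, $\partial_x \sigma$ evaluated along $X_u$, together with $\nabla_x K_\lambda(X_u)$ and $\nabla_\lambda K_\lambda(X_u)$ capturing the feedback dependence. Under the standing Lipschitz assumptions on $b$ and $\sigma$ and the bounds \eqref{eq:hypotheses_borne_feedback_K_lambda_assumption}, this linear SDE is well posed, and the Burkholder-Davis-Gundy inequality \eqref{eq:Burkhloder_Davis_Gundy_ineq_stoch_int} combined with Gronwall's lemma provides the a priori bound $\| Y_u[h] \|_{C^2_\mathcal{F}} \le C \|h\|_\mathcal{U}$, so that $h \mapsto Y_u[h]$ is a bounded linear operator from $\mathcal{U}$ into $C^2_\mathcal{F}$.

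Next, I would estimate the residual $R(h) = X_{u+h} - X_u - Y_u[h]$. Subtracting the SDEs satisfied by $X_{u+h}$, $X_u$ and $Y_u[h]$, and Taylor-expanding $b$ and $\sigma$ with first-order remainders controlled through the Lipschitz-gradient assumption, one obtains a linear SDE for $R(h)$ whose source terms are quadratic in $\|h\|_\mathcal{U}$ (using that $\|X_{u+h} - X_u\|_{C^2_\mathcal{F}} = O(\|h\|_\mathcal{U})$, derived similarly). A second application of BDG and Gronwall then yields $\|R(h)\|_{C^2_\mathcal{F}} = o(\|h\|_\mathcal{U})$, giving Fréchet differentiability of the solution map with differential $h \mapsto Y_u[h]$. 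The chain rule applied with the $C^1$ Lipschitz gradients of $f$ and $g$ then produces an explicit expression for $D_u J(u) \cdot h$ as a time integral involving $\partial_x f$, $\partial_u f$ composed with $Y_u[h]$, $\delta v$, and the feedback differentials, plus a terminal term in $\partial_x g$, which establishes Fréchet differentiability of $J : \mathcal{U} \to L^2_{\mathcal{F}_T}$. For each $\omega$ outside a null set, the resulting map $h \mapsto (D_u J(u) \cdot h)(\omega)$ is a continuous linear form on the Hilbert space $\mathcal{U}$, and the Riesz representation theorem produces the desired $\nabla J(u,\omega) \in \mathcal{U}$.

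The main obstacle is the control of the residual $R(h)$: the feedback law $K_\lambda$ couples the state $X_u$ and the parameter $\lambda$ nonlinearly, so the variational SDE and its error bounds must be derived carefully, using \eqref{eq:lipschitzianity_of_feedback_K} to handle the mixed cross terms coming from the Taylor expansion of the composed drift $(t,x,\lambda) \mapsto b(t,x,v(t) + K_\lambda(x))$ and similarly for $\sigma$. Once this is handled, the rest reduces to classical well-posedness and stability estimates for linear SDEs.
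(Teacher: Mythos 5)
Your proposal follows essentially the same route as the paper's own proof: the paper first proves the Lipschitz stability estimate $\Vert X_{u}-X_{h}\Vert_{C^2_\mathcal{F}}\le C_{u,h}\Vert u-h\Vert_{\mathcal{U}}$ (your $O(\Vert h\Vert_{\mathcal{U}})$ bound), then introduces the same linear variational SDE (solved explicitly via the fundamental matrix $\phi_u$) and shows the residual is $o(\epsilon)$, and finally Taylor-expands $f$ and $g$ and applies Riesz in $\mathcal{U}$. The only cosmetic difference is that you phrase the residual estimate directly in $h$ while the paper parametrizes along $u+\epsilon h$; the substance is the same.
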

The proof can be found in \ref{proof:gradient_formula_markovian}, along with an explicit formula of $\nabla J$. 

\textbf{Additional notations:} In what follows, we call \textbf{gradient of $J$} the function $\nabla J(\cdot , \cdot)$, that we see as a mapping $\nabla J : \mathcal{U} \rightarrow L^2_{\mathcal{F}_T}(\Omega, \mathcal{U})$. We may need to differentiate the two components for clarity, in which case we will write $\nabla J = (\nabla_v J , \nabla_\lambda J)$, with $\nabla_v J(u) \in L^2_{\mathcal{F}_T}(\Omega, L^2([0,T],\mathbb{R}^m))$ and $\nabla_\lambda J(u) \in L^2_{\mathcal{F}_T}(\Omega, \mathbb{R}^q)$. Additionally, for an element $z$ in $L^2_{\mathcal{F}_T}(\Omega, \mathcal{U})$ and $u$ in $\mathcal{U}$, we will denote by $\langle z , u  \rangle_\mathcal{U}$ the random variable in $L^2_{\mathcal{F}_T}(\Omega, \mathbb{R})$ such that
\begin{equation*}
\forall \omega \in \Omega, \quad \langle z , u  \rangle_\mathcal{U}(\omega) = \langle z(\omega) , u  \rangle_\mathcal{U}.
\end{equation*}
In particular, we can ``invert'' the expectation and the scalar product as follows:
\begin{equation}\label{eq:invert_mean_expectation_random_scalar_product}
\espE[ \langle z , u  \rangle_\mathcal{U} ] = \langle \espE[ z ] , u  \rangle_\mathcal{U}.
\end{equation}
With these properties in mind, we can establish a necessary condition any optimal solution to \eqref{eq:general_OCP_minmax} must verify as follows:
\begin{proposition}\label{prop:optimality_condition}
Let $\rho$ be a finite coherent risk measure and let $u^*$ be an optimal solution to \eqref{eq:general_OCP}. There exists $\zeta^* \in \partial \rho(0)$ such that 
\begin{equation}\label{eq:optimality_condition}
   \langle  \espE[ \zeta^* \nabla J(u^*) ]  , h  \rangle_\mathcal{U} \ = 0 , \quad \textnormal{for every} \quad h \in \mathcal{U}.
\end{equation}
\end{proposition}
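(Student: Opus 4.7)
The plan is to derive a first-order condition by computing the directional derivative of $\rho \circ J$ at $u^*$, and then extracting a single dual variable via a separation argument. First, I would invoke the chain rule: since $J : \mathcal{U} \to L^2_{\mathcal{F}_T}$ is Fréchet differentiable by Lemma \ref{lem:frechet_differentiability_of_cost_J} and $\rho$ is Hadamard directionally subdifferentiable by item 4 of Theorem \ref{thm:coherent_risk_measure_main_properties}, the composition is directionally differentiable in every direction $h \in \mathcal{U}$ with
\begin{equation*}
    (\rho \circ J)'(u^*, h) \ = \ \rho'\big( J(u^*), D_u J(u^*) \cdot h \big) \ = \ \underset{\zeta \in \partial\rho(J(u^*))}{\max} \ \espE\big[ \zeta \, \langle \nabla J(u^*), h \rangle_\mathcal{U} \big].
\end{equation*}
Using the swap \eqref{eq:invert_mean_expectation_random_scalar_product}, the right-hand side equals $\max_{\zeta \in \partial\rho(J(u^*))} \langle \espE[\zeta \nabla J(u^*)], h \rangle_\mathcal{U}$. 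Since $u^*$ is a minimizer, this quantity must be nonnegative for every $h$; applying this to both $h$ and $-h$ yields that the max over $\zeta$ of the pairing is $\geq 0$ while the min is $\leq 0$, so in particular zero lies between them.

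Next, I would introduce the image set
\begin{equation*}
    S \ \triangleq \ \{ \espE[\zeta \nabla J(u^*)] \ : \ \zeta \in \partial\rho(J(u^*)) \} \ \subset \ \mathcal{U},
\end{equation*}
and show it is convex and weakly compact. Convexity follows from the convexity of $\partial\rho(J(u^*))$ (it is the argmax of a linear functional over the convex set $\partial\rho(0)$) and the linearity of the map $\Phi : \zeta \mapsto \espE[\zeta \nabla J(u^*)]$. For compactness, $\partial\rho(J(u^*)) \subset \partial\rho(0)$ is weakly compact in $L^2_{\mathcal{F}_T}$ by item 1 of Theorem \ref{thm:coherent_risk_measure_main_properties} and reflexivity, and for each fixed $h \in \mathcal{U}$ the functional $\zeta \mapsto \langle \Phi(\zeta), h \rangle_\mathcal{U} = \espE[\zeta \, \langle \nabla J(u^*), h \rangle_\mathcal{U}]$ is continuous for the weak topology of $L^2_{\mathcal{F}_T}$, because $\langle \nabla J(u^*), h \rangle_\mathcal{U}$ is an element of $L^2_{\mathcal{F}_T}(\Omega, \mathbb{R})$. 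Hence $\Phi$ is weak-to-weak continuous, so $S = \Phi(\partial\rho(J(u^*)))$ is weakly compact in $\mathcal{U}$.

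To conclude, I would argue by contradiction. Suppose $0 \notin S$. Then Hahn-Banach separation (applied to the weakly closed convex set $S$ and the compact set $\{0\}$ in the Hilbert space $\mathcal{U}$) furnishes an $h \in \mathcal{U}$ and $\delta > 0$ such that $\langle s, h \rangle_\mathcal{U} \geq \delta$ for all $s \in S$. Then $\max_{\zeta \in \partial\rho(J(u^*))} \langle \espE[\zeta \nabla J(u^*)], -h \rangle_\mathcal{U} \leq -\delta < 0$, which contradicts the optimality inequality $(\rho \circ J)'(u^*, -h) \geq 0$ from the previous step. Therefore $0 \in S$, which produces the desired $\zeta^* \in \partial\rho(J(u^*)) \subset \partial\rho(0)$ with $\espE[\zeta^* \nabla J(u^*)] = 0$ in $\mathcal{U}$, i.e., \eqref{eq:optimality_condition}.

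I expect the main delicate point to be Step 2-to-3: the optimality condition gives a $\zeta$ depending on $h$, but the proposition requires a single $\zeta^*$ valid simultaneously for all directions. Handling this exchange rigorously requires the weak compactness/convexity of $S$ and the separation argument; verifying weak-weak continuity of $\Phi$ (and hence compactness of $S$) is the key technical ingredient that makes the separation step work.
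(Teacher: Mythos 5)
Your proof is correct, and it reaches the conclusion by a genuinely different route than the paper. The paper starts from the same first-order condition (via the chain rule for Hadamard directional derivatives and the max-representation of $\rho'$), writes it as $\sup_{u}\inf_{\zeta\in\partial\rho(J(u^*))}\langle \espE[-\zeta\,\nabla J(u^*)],u-u^*\rangle_\mathcal{U}\le 0$, and then extracts the single multiplier $\zeta^*$ by invoking Sion's min-max theorem (using weak compactness and convexity of $\partial\rho(J(u^*))$ together with bilinearity of the pairing) followed by a lower-semicontinuity argument to attain the infimum. You instead push the subdifferential forward through the linear map $\zeta\mapsto\espE[\zeta\,\nabla J(u^*)]$, observe that the image $S\subset\mathcal{U}$ is convex and weakly compact (your verification of weak-to-weak continuity is the right technical point, and it works because $\langle\nabla J(u^*),h\rangle_\mathcal{U}\in L^2_{\mathcal{F}_T}$), and conclude $0\in S$ by Hahn--Banach separation. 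The two arguments consume exactly the same hypotheses — convexity and weak compactness of $\partial\rho(J(u^*))$, bilinearity of $(\zeta,h)\mapsto\espE[\zeta\langle\nabla J(u^*),h\rangle_\mathcal{U}]$ — and are essentially dual to one another: the minimax equality the paper needs is equivalent, in this bilinear setting, to the separation statement you prove. Your version is arguably more elementary (it avoids citing Sion and the l.s.c.\ attainment step, replacing both with the projection/separation theorem in a Hilbert space) and it even delivers the marginally stronger conclusion $\espE[\zeta^*\nabla J(u^*)]=0$ as an identity in $\mathcal{U}$, from which \eqref{eq:optimality_condition} follows immediately. One point worth making explicit if you write this up: $\partial\rho(J(u^*))\subset\partial\rho(0)$ by item 3 of Theorem \ref{thm:coherent_risk_measure_main_properties}, which is what places $\zeta^*$ in the set required by the statement.
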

The proof of Proposition \ref{prop:optimality_condition} is quite similar to the proof given in \cite{kouri_existence_2018} or \cite{bonalli_first-order_2023}. Nevertheless, we report the full proof of Proposition \ref{prop:optimality_condition} in \ref{proof:optimality_condition} for the sake of completeness.

\subsection{The modified problem}

Seeking extremals of min-max problems is generally a hard task, and not that many algorithms which enjoy convergence guarantees exist in the literature. Notably, for finite-dimensional problems with strongly concave costs, some convergence results are provided in \cite{chen_proximal_2021}. Inspired by this latter work, we propose an algorithm that works on a modified version of problem~\eqref{eq:general_OCP_minmax} whose cost is strongly concave and whose solutions are arbitrarily close to the solutions of the original problem. 
More specifically, the modified problem takes the following form:
\begin{equation}\label{eq:modified_OCP_minmax}
\underset{u \in \mathcal{U}}{\min} \ \underset{\zeta \in \partial \rho(0)}{\max} \ \espE[ (\zeta - \gamma \zeta^2) J(u) ] ,
\end{equation}
where $\gamma > 0$ is a ``concavifying'' parameter, which controls how close \eqref{eq:modified_OCP_minmax} is to the original problem. 
Note that the term $\espE[ \zeta^2 J(u) ]$ is well defined as $\zeta \in \partial \rho(0) \subset L^{\infty}_{\mathcal{F}_T}(\Omega , \mathbb{R})$.
In Section \ref{sec:Convergence}, we discuss in what sense the solutions to~\eqref{eq:modified_OCP_minmax} converge to solutions to \eqref{eq:general_OCP_minmax} for $\gamma \to 0$, endowing our methodology with guarantees of success. Also, from a numerical point of view, we show later that $\gamma > 0$ may be chosen in the order of the machine precision. 

\section{Ascent-Descent Algorithm}\label{sec:Algo}

To solve the min-max problem \eqref{eq:general_OCP_minmax}, we first introduce in this section a gradient descent-ascent-based approach to solve the modified problem \eqref{eq:modified_OCP_minmax}. A gradient descent-ascent is an iterative optimization algorithm that can be used to find saddle points. Descent-ascent algorithms represent arguably sound paradigms to seek extremals for general min-max problems. Importantly, under appropriate assumptions, convergence to extremals may be theoretically proven, as we will show later. Until further notice, we start by considering the setting of general non-linear dynamics (Setting A).


\subsection{Update rule}

The gradient descent-ascent algorithm is an iterative optimization algorithm for min-max problems. At each iteration, it computes the Riesz representation of the gradient of the function being maximized and the gradient of the function being minimized. It then updates the estimates by moving in the direction of their respective gradients.
With obvious notations, our gradient descent-ascent algorithm updates at each iteration $n$ our estimates $\zeta_n$ and $(v_n , K_{\lambda_n})$ with the following rule: 
\begin{equation}\label{eq:update_rule_non_modified_algo}
\begin{split}
    &v_{n+1} = v_n - \eta \espE[ \zeta_n \nabla_{v} J( v_n , K_{\lambda_n}  ) ], \\
    & \lambda_{n+1} = \lambda_{n} - \eta \espE \left[ \zeta_n \nabla_{\lambda} J( v_n , K_{\lambda_n}  ) \right], \\
    & \zeta_{n+1} = \mathcal{P}_{\partial \rho (0)} ( \zeta_n + \beta J( v_n , K_{\lambda_n} ) ),
\end{split}
\end{equation}
where $ \mathcal{P}_{\partial \rho (0)}$ is the projection on the closed convex set $\partial \rho (0)$, and $\eta$ and $\beta$ are two positive constants that represent the step size of the gradient. To increase convergence speed, one could make these steps vary from iteration to iteration based on the change of direction of the gradient \cite{kingma_adam_2017}. We will keep them constant for the sake of simplicity. All convergence results should be valid for a range of step sizes that are both upper and lower bounded, although proof of this improvement is left for future work.
We recall that $\partial \rho (0)$ can be viewed as a subset of $L^{\infty}_{\mathcal{F}_T}(\Omega,\mathbb{R})$, which justifies the above mathematical operations between $\zeta_n$ and the other terms. Note that it is important to project $\zeta_n + \beta J(u_n)$ onto $\partial \rho (0)$, as there is no a priori guarantee that $\zeta_n + \beta J(u_n)$ stays inside $\partial \rho (0)$.

Unfortunately, through the scheme \eqref{eq:update_rule_non_modified_algo}, no clear convergence result is available. To develop theoretical guarantees of convergence, we propose to rather leverage the following alternative rule, which essentially represents an update rule for Problem \eqref{eq:modified_OCP_minmax}:
\begin{equation}\label{eq:update_rule_modified_algo}
\begin{split}
    &v_{n+1} =v_n - \eta \espE[ \zeta_n (1 - \gamma \zeta_n) \nabla_{v} J( v_n , K_{\lambda_n}  ) ], \\
    & \lambda_{n+1} = \lambda_{n} - \eta \espE \left[ \zeta_n (1 - \gamma \zeta_n) \nabla_{\lambda} J( v_n , K_{\lambda_n}  ) \right], \\
    & \zeta_{n+1} = \mathcal{P}_{\partial \rho (0)} ( \zeta_n + \beta (1 - 2\gamma \zeta_n) J( v_n , K_{\lambda_n} ) ).
\end{split}
\end{equation}
The algorithms described by \eqref{eq:update_rule_non_modified_algo} and \eqref{eq:update_rule_modified_algo} offer the advantage of being easily implementable, provided explicit expressions of the gradient $\nabla J$ are available. We provide such expressions in the following section.

\subsection{Computation of gradients} \label{sec:gradient}

The practical implementation of \eqref{eq:update_rule_non_modified_algo} and \eqref{eq:update_rule_modified_algo} requires computing the gradient of the cost, and in particular of the trajectory, with respect to both the variables $\zeta$ and $u$. Below, we summarize the expressions of this gradient, referring the reader to \ref{proof:gradient_formula_markovian} for their proof. Let us clarify that we denote $K_\lambda'(x)$ the derivative of $K_\lambda(x)$ with respect to $x$. 

\begin{proposition}\label{prop:gradient_computation}
Fix a control $u = (v, \lambda) \in \mathcal{U}$. Under the assumptions of Setting A, the Riesz representation of the gradient of $J$ at $u = (v, \lambda)$ is 
\begin{align*}
    & \nabla_v J(v,\lambda)(t) = I_{u}(t) \phi_{u}^{-1}(t) \partial_{u} b(t, X_{u}(t), u(t)) \\
    & \nabla_\lambda J(v,\lambda) = \int_0^T I_{u}(s) \phi_{u}^{-1}(s) \partial_u b(s, X_{u}(s), u(s)) \nabla_\lambda K_\lambda(X_{u}(s))  ds.
\end{align*}
where
\begin{equation*}
\begin{split}
    I_{u}(s) & \triangleq \partial_x g(T,X_{u}(T)) \phi_{u}(T) \\ 
    & + \int_s^T [ \partial_x f(t,X_{u}(t), u(t)) + \partial_u f(t,X_{u}(t), u(t)) K_\lambda'(X_{u}(t)) ] \phi_{u}(t) dt.
\end{split}
\end{equation*}
Here, $\phi_u(.)$ is the fundamental matrix solution to the linearized problem, i.e., the unique solution to
\begin{equation}\label{eq:linearized_sde_phi}
    \left\{
    \begin{array}{ll}
    &d\phi(t) = [ \partial_x b(t,X_{u}(t),u(t)) + \partial_u b(t,X_{u}(t),u(t)) K_\lambda'(X_{u}(t))] \phi(t) dt \\
    & \hspace{2.5em} + \ \partial_x \sigma(t,X_u(t)) \phi(t) dW_t,\\
    &\phi(0) = Id_n .
    \end{array}
    \right.
    \end{equation}

\end{proposition}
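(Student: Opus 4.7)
The plan is to carry out a classical sensitivity analysis for SDEs, adapted to the stochastic feedback setting. First I would establish that $u \mapsto X_u$, viewed as a map from $\mathcal{U}$ into $C^2_\mathcal{F}([0,T]\times\Omega,\mathbb{R}^n)$, is Fréchet differentiable at $u = (v,\lambda)$, with directional derivative $Y_h$ along $h = (\delta v, \delta \lambda)$ satisfying the linearization of \eqref{eq:base_system_nonlinear} around $X_u$. Then I would express $Y_h$ in closed form via the fundamental matrix $\phi_u$ of \eqref{eq:linearized_sde_phi} using a stochastic variation-of-constants formula. Finally I would differentiate $J$ and match the resulting linear form with the inner product on $\mathcal{U}$ to read off the Riesz representatives $\nabla_v J$ and $\nabla_\lambda J$.

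Concretely, the argument unfolds in three steps. (i) Accounting for the feedback form $u_{v,\lambda}(t,x) = v(t) + K_\lambda(x)$, a direct expansion yields the linearized SDE
\begin{equation*}
dY_h = \bigl[(\partial_x b + \partial_u b\, K_\lambda'(X_u)) Y_h + \partial_u b\,(\delta v + \nabla_\lambda K_\lambda(X_u)\delta\lambda)\bigr] dt + \partial_x\sigma\, Y_h\, dW_t, \quad Y_h(0) = 0.
\end{equation*}
Justifying this identification rigorously, i.e.\ controlling the Taylor remainder $\|X_{u+\varepsilon h} - X_u - \varepsilon Y_h\|_{C^2_\mathcal{F}} = o(\varepsilon)\|h\|_\mathcal{U}$, follows from a Grönwall argument combined with the BDG inequality \eqref{eq:Burkhloder_Davis_Gundy_ineq_stoch_int} and the Lipschitz-gradient hypotheses on $b$, $\sigma$, and $K_\lambda$. (ii) Apply Itô's formula to obtain the SDE satisfied by $\phi_u^{-1}$ (with the usual quadratic-variation correction), then apply Itô's product rule to $\phi_u^{-1} Y_h$; the martingale contributions cancel, yielding
\begin{equation*}
Y_h(t) = \phi_u(t)\int_0^t \phi_u^{-1}(s)\, \partial_u b(s, X_u(s), u(s))\bigl[\delta v(s) + \nabla_\lambda K_\lambda(X_u(s))\delta\lambda\bigr] ds.
\end{equation*}
(iii) Differentiating $J$ under the integral, one obtains
\begin{equation*}
D_uJ(u)\cdot h = \int_0^T \bigl[\partial_x f + \partial_u f\, K_\lambda'(X_u)\bigr](t)\, Y_h(t)\, dt + \partial_x g(T,X_u(T))\, Y_h(T),
\end{equation*}
up to direct terms in $(\delta v, \delta\lambda)$. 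Substituting the expression of $Y_h$ and swapping the order of integration via Fubini brings out precisely the quantity $I_u(s)$ as the bracket multiplying $\phi_u^{-1}(s)\partial_u b(s)$. Reading the resulting integrals against the inner product on $\mathcal{U}$ then identifies the claimed Riesz representatives $\omega$-wise.

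The main technical obstacle is step (ii): the rigorous inversion of $\phi_u$ and the execution of the stochastic variation-of-constants formula require $L^p_\mathcal{F}$ estimates on $\phi_u^{-1}$, which in turn rely on the uniform boundedness of $\partial_x\sigma$ (from the Lipschitz-gradient hypothesis) combined with exponential-martingale controls, and on a careful application of stochastic Fubini. Once step (ii) is in place, the remainder bound in step (i) is a standard Grönwall--BDG computation, and the manipulations in step (iii) are purely algebraic.
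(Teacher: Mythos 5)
Your proposal follows essentially the same route as the paper's proof: a Gr\"onwall--BDG estimate for the state perturbation, the linearized SDE whose solution is expressed by variation of constants through the fundamental matrix $\phi_u$, and a Fubini swap to identify $I_u(s)$ and read off the Riesz representatives against $\langle \cdot,\cdot\rangle_\mathcal{U}$. The only difference is cosmetic: where you derive the stochastic variation-of-constants formula by hand via It\^o calculus on $\phi_u^{-1}Y_h$, the paper simply invokes a known existence and explicit-representation result for linear SDEs.
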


Thus, the primary challenge of implementing this method lies in simulating the trajectories of $X_u$ and $\phi_u$ and computing expectations of random variables that involve terms from those trajectories. To deal with these technical details, we used a classical approach presented in \cite[Chapter 10]{kloeden_numerical_1992} and refer to it for more extensive elaboration.
The contents of this section furnish the necessary components to implement the algorithm and evaluate its performance for the general Setting A.

In the next section, we establish some convergence results that may be of particular significance for Setting B, as they provide insight into the convergence of the algorithm towards a critical point that satisfies the necessary optimality condition \eqref{eq:optimality_condition}.

\section{Convergence Results}\label{sec:Convergence}

We leverage the method introduced in \cite{chen_proximal_2021} to examine the convergence of the smooth non-convex/strong concave gradient descent-ascent algorithm in the most general Setting A. 
In what follows, we adapt the approach to an infinite-dimensional space. With some minor adjustments, we show that the gradient of the cost converges to 0 and that the cost function converges to a finite value. While the study in \cite{chen_proximal_2021} also establishes that in finite-dimensional spaces, every converging subsequence of the control estimates tends to a critical point, we were unable to prove this result in our general setting. Nevertheless, in the slightly more restrictive Setting B, any weakly converging subsequence of our control estimates converges to a point that is a ‘‘$\gamma$-critical point." That is, a control $\overline{u}_\gamma$ that verifies 
\begin{equation}\label{eq:gamma_critical_points_optimality_condition}
    \exists \overline{\zeta}, \ \langle \espE[ \overline{\zeta} \ \nabla J(\overline{u}) ], h \rangle_{\mathcal{U}}  =  \Vert h \Vert_{\mathcal{U}} O(\gamma) \quad \forall h \in \mathcal{U}.
\end{equation}
This important result shows that by progressively selecting smaller values of $\gamma$, we can obtain a control that verifies the necessary optimality condition \eqref{eq:optimality_condition}. In addition, taking $\gamma \to 0$ yields critical points for the original problem, as we state at the end of this section.

\subsection{Convergence of the gradient in Setting A}

Let $(u_n, \zeta_n)$ be the estimates given by algorithm \eqref{eq:update_rule_modified_algo}, in what follows, we prove the convergence of $\Vert u_{n+1} - u_n  \Vert_{\mathcal{U}}$ to 0 in Setting A, as well as the convergence of the \textit{$\gamma$-approximated cost}, denoted $\Phi_\gamma : \mathcal{U} \to \mathbb{R}$ and defined as
$$
\Phi_\gamma(u) \triangleq \underset{\zeta \in \partial \rho(0)}{\max \ } \espE[ \zeta (1 - \gamma \zeta ) J(u_n) ]. 
$$
The proof of the convergence closely follows the one presented in \cite{chen_proximal_2021}, and we, therefore, do not explicit all of the technical details. We indicate where changes were needed and develop the modifications in the Appendix for completeness. For the sake of clarity, we denote
$$
\zeta_u \triangleq \underset{\zeta \in \partial \rho(0)}{\arg \max \ } \espE[ \zeta (1 - \gamma \zeta ) J(u) ]
$$
which is well defined as $\zeta \mapsto \espE[ \zeta (1 - \gamma \zeta ) J(u) ]$ is a weakly continuous and strongly concave function over a weakly compact set.

\begin{proposition}\label{prop:convergence_gradient_and_step_size}
Let $L > 0$ be such that $J$ has L-Lipschitz gradient, and $\mu > 0$ such that $\zeta \mapsto \espE [\zeta(1-\gamma \zeta) \nabla J(u_n)]$ is $\mu$-strongly concave. We define $K = \frac{L}{\mu}$. If the gradient rates $\eta$ and $\beta$ verify $\eta < \frac{1}{K^3 (L+3)^2}$ and $\beta < \frac{1}{L}$, then the sequences $(u_n, \zeta_n)$ given by the update rule \eqref{eq:update_rule_modified_algo} verify:

\begin{equation*}\label{eq:convergence_gradient_general}
\Vert u_{n+1} - u_n \Vert_{\mathcal{U}} \rightarrow 0, \quad \Vert \zeta_{n+1} - \zeta_n \Vert_{L^2_{\mathcal{F}_T}}^2 \rightarrow 0, \quad \Vert \zeta_n - \zeta_{u_n} \Vert_{L^2_{\mathcal{F}_T}}^2 \rightarrow 0,
\end{equation*}
\end{proposition}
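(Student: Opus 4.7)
The plan is to adapt the Lyapunov-function argument of \cite{chen_proximal_2021} to our infinite-dimensional setting, using the Hilbert space structure of $\mathcal{U}$ and $L^2_{\mathcal{F}_T}$ together with the weak-$*$ compactness and convexity of $\partial\rho(0)$. The overall strategy is to combine a descent inequality for $\Phi_\gamma$ along the $u$-updates with a contraction inequality for $\|\zeta_n - \zeta_{u_n}\|^2$ coming from the projected ascent on a strongly concave function, and bundle them into a potential whose decrements give summability of the three target quantities.

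First I would set up the smoothness machinery. Because $\zeta \mapsto \mathbb{E}[\zeta(1-\gamma\zeta)J(u)]$ is $\mu$-strongly concave on a weakly compact convex subset of $L^2_{\mathcal{F}_T}$, the maximizer $\zeta_u$ exists, is unique, and a standard perturbation argument (subtract the two first-order optimality conditions at $u$ and $u'$ and exploit strong concavity) gives that $u \mapsto \zeta_u$ is $K$-Lipschitz with $K = L/\mu$. A Danskin-style computation then yields
\begin{equation*}
\nabla \Phi_\gamma(u) \;=\; \mathbb{E}\!\left[\zeta_u(1-\gamma\zeta_u)\nabla J(u)\right],
\end{equation*}
from which one obtains, via the boundedness of $\partial\rho(0)\subset L^\infty_{\mathcal{F}_T}$ and the $L$-Lipschitz gradient of $J$ (Lemma \ref{lem:frechet_differentiability_of_cost_J}), that $\nabla \Phi_\gamma$ is $L_\Phi$-Lipschitz with some $L_\Phi$ controlled polynomially by $L$, $K$, and constants depending on $\partial\rho(0)$. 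This justifies applying a classical descent lemma to $\Phi_\gamma$.

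Next I would prove the two core one-step inequalities. For the primal step, the descent lemma applied to $\Phi_\gamma$ at $u_n,u_{n+1}$ and the identity $u_{n+1} - u_n = -\eta\,\mathbb{E}[\zeta_n(1-\gamma\zeta_n)\nabla J(u_n)]$ produces, after bounding the error between $\mathbb{E}[\zeta_n(1-\gamma\zeta_n)\nabla J(u_n)]$ and $\nabla \Phi_\gamma(u_n)$ by $C\|\zeta_n - \zeta_{u_n}\|_{L^2_{\mathcal{F}_T}}$, an inequality of the form
\begin{equation*}
\Phi_\gamma(u_{n+1}) \;\le\; \Phi_\gamma(u_n) - c_1\|u_{n+1}-u_n\|_\mathcal{U}^2 + c_2\,\|\zeta_n - \zeta_{u_n}\|_{L^2_{\mathcal{F}_T}}^2,
\end{equation*}
with $c_1>0$ provided $\eta$ is small enough. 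For the dual step, firm non-expansiveness of $\mathcal{P}_{\partial\rho(0)}$ together with $\mu$-strong concavity and $L$-smoothness in $\zeta$, and the fact that $\zeta_{u_n}$ is a fixed point of the projected ascent map at $u_n$, yields the contraction
\begin{equation*}
\|\zeta_{n+1}-\zeta_{u_n}\|_{L^2_{\mathcal{F}_T}}^2 \;\le\; (1-\mu\beta)\,\|\zeta_n-\zeta_{u_n}\|_{L^2_{\mathcal{F}_T}}^2,
\end{equation*}
for $\beta < 1/L$. I would then absorb the drift of the maximizer using $\|\zeta_{u_{n+1}}-\zeta_{u_n}\|\le K\|u_{n+1}-u_n\|$ and the elementary inequality $(a+b)^2\le(1+\varepsilon)a^2 + (1+1/\varepsilon)b^2$ to obtain
\begin{equation*}
\|\zeta_{n+1}-\zeta_{u_{n+1}}\|_{L^2_{\mathcal{F}_T}}^2 \;\le\; (1-\tfrac{\mu\beta}{2})\,\|\zeta_n-\zeta_{u_n}\|_{L^2_{\mathcal{F}_T}}^2 + c_3\,\|u_{n+1}-u_n\|_\mathcal{U}^2.
\end{equation*}

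Combining these two inequalities, I would define the Lyapunov potential
\begin{equation*}
V_n \;\triangleq\; \Phi_\gamma(u_n) + \kappa\,\|\zeta_n - \zeta_{u_n}\|_{L^2_{\mathcal{F}_T}}^2,
\end{equation*}
and choose $\kappa>0$ so that $V_{n+1} - V_n \le -a\|u_{n+1}-u_n\|_\mathcal{U}^2 - b\|\zeta_n - \zeta_{u_n}\|_{L^2_{\mathcal{F}_T}}^2$ with $a,b>0$; the stated step-size conditions $\eta < 1/(K^3(L+3)^2)$ and $\beta < 1/L$ are precisely what makes this reduction work. Since $J\ge 0$ and $\partial\rho(0)$ is bounded, $\Phi_\gamma$ is bounded below, so $V_n$ is bounded below; telescoping yields the summability
\begin{equation*}
\sum_{n\ge 0} \|u_{n+1}-u_n\|_\mathcal{U}^2 + \sum_{n\ge 0}\|\zeta_n-\zeta_{u_n}\|_{L^2_{\mathcal{F}_T}}^2 \;<\;\infty,
\end{equation*}
which immediately gives the first and third limits. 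The second limit follows from $\zeta_{n+1}-\zeta_n = (\zeta_{n+1}-\zeta_{u_n}) + (\zeta_{u_n}-\zeta_n)$, the contraction estimate above, and the summability just obtained. The main obstacle I expect is keeping uniform constants under control in the infinite-dimensional projected update and, in particular, rigorously establishing the Danskin-type gradient formula and the Lipschitz continuity of $u\mapsto\zeta_u$ in $L^2_{\mathcal{F}_T}$ (rather than in the weak-$*$ topology in which $\partial\rho(0)$ is naturally compact); this requires exploiting that the strong concavity is uniform in $u$ and that $\partial\rho(0)$ is bounded in $L^\infty_{\mathcal{F}_T}\subset L^2_{\mathcal{F}_T}$ to bridge the topologies.
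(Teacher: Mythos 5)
Your proposal is correct and follows essentially the same route as the paper: both establish the Lipschitz continuity of $u\mapsto\zeta_u$ and the Danskin-type formula $\nabla\Phi_\gamma(u)=\espE[\zeta_u(1-\gamma\zeta_u)\nabla J(u)]$ (the paper via Bernhard's weak-topology version of Danskin's theorem), then combine the primal descent inequality with the projected-ascent contraction into a Lyapunov potential of the form $\Phi_\gamma(u_n)+\kappa\Vert\zeta_n-\zeta_{u_n}\Vert^2$ (the paper fixes $\kappa=1-\tfrac{1}{4K^2}$, deferring the one-step computations to \cite{chen_proximal_2021}), and conclude by telescoping. The intermediate inequalities you spell out are precisely the ones the paper imports by reference.
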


\begin{proof}
As mentioned above, we generalise the proof from \cite[Proposition 2]{chen_proximal_2021} with $f(u,\zeta) = \espE [\zeta(1-\gamma \zeta) J(u)]$, $g = 0$ and $h = \delta_{\partial \rho(0)}$, the convex characteristic function of $\partial \rho(0)$. The key argument is to consider the function 
\begin{equation}
    H(u,\zeta) = \Phi_\gamma(u) + \left( 1 - \frac{1}{4 K^2} \right) \Vert  \zeta - \zeta_u   \Vert_{L^2_{\mathcal{F}_T}}^2
\end{equation}
and show that $H(u_n,\zeta_n)$ is decreasing over the iterations of the algorithm.
To prove it, we must show the Lipschitzianity of the functions $u \mapsto \zeta_u$ and $u \mapsto \nabla \Phi_\gamma(u)$. The first one is straightforward, as shown in \cite[Proposition 1]{chen_proximal_2021}. The second one however needs some adjustments to our infinite-dimensional setting. Using the version of the Danskin theorem found in \cite{bernhard_theorem_1995} with the weak topology, we can first prove the following lemma. 
\begin{lemma}\label{lem:frechet_differentiable_nabla_gradient_phi_max_cost}
    $\Phi_\gamma$ is Fréchet differentiable and its gradient is given by
\begin{equation*}
    \nabla \Phi_\gamma = \espE [\zeta_u(1-\gamma \zeta_u) \nabla J(u)].
\end{equation*}
\end{lemma}
The Lipschitzianity of $\nabla \Phi_\gamma(u)$ can be deduced as follows:
\begin{equation*}
\begin{split}
\Vert \Phi(u_2) - \Phi(u_1) \Vert & = \Vert  \nabla_u \mathcal{J}_\gamma (u_2, \zeta_{u_2}) - \nabla_u \mathcal{J}_\gamma (u_1, \zeta_{u_1}) \Vert \\
& \leq L ( \Vert u_2 - u_1 \Vert +  \Vert \zeta_{u_2} - \zeta_{u_1} \Vert  ) \\
& \leq L(1+K) \Vert u_2 - u_1 \Vert
\end{split}
\end{equation*}
With these two Lipschitzianity results, we can execute the same computations as in \cite[Proposition 2]{chen_proximal_2021} and prove the inequality
\begin{equation*}
    H(u_{n+1},\zeta_{n+1}) \leq H(u_{n},\zeta_{n}) - 2 \Vert u_{n+1} - u_n \Vert_{\mathcal{U}}^2 - \frac{1}{4 K^2} ( \Vert \zeta_{n+1} - \zeta_{u_{n+1}} \Vert_{L^2_{\mathcal{F}_T}}^2 + \Vert \zeta_n - \zeta_{u_n} \Vert_{L^2_{\mathcal{F}_T}}^2 ) .
\end{equation*}
From this inequality we can deduce the sequence $H(u_{n},\zeta_{n})$ is decreasing and therefore converges to $H^*$. By summing the inequality over the iterations, we obtain a telescoping series that yields
\begin{equation*}
\begin{split}
     \sum_{n=0}^N & \left[  2 \Vert u_{n+1} - u_n \Vert_{\mathcal{U}}^2 - \frac{1}{4 K^2} ( \Vert \zeta_{n+1} - \zeta_{u_{n+1}} \Vert_{L^2_{\mathcal{F}_T}}^2 + \Vert \zeta_n - \zeta_{u_n} \Vert_{L^2_{\mathcal{F}_T}}^2 ) \right] \\
     & \leq \sum_{n=0}^N H(u_{n},\zeta_{n}) - H(u_{n+1},\zeta_{n+1}) \leq H(u_{0},\zeta_{0}) - H^*. 
\end{split}
\end{equation*}
This shows that the series on the left is convergent, meaning that its terms tend to zero, which proves our proposition.
\end{proof}

This result additionally shows some other interesting properties of the algorithm, which are of straightforward proof.
\begin{corollary}
\hspace{0.1em}
\begin{enumerate}
    \item Since $\Vert \zeta_n - \zeta_{u_n} \Vert_{L^2_{\mathcal{F}_T}}^2$ tends to zero and H decreases, the sequence $\Phi_\gamma(u_n)$ converges to a finite limit.
    \item If the cost $J$ is coercive, the sequence $u_n$ is bounded in $\mathcal{U}$ and is thus in a weakly compact set.
\end{enumerate}
\end{corollary}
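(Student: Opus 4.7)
The plan is to read both assertions off the descent inequality already established in the proof of Proposition 5.1,
\begin{equation*}
H(u_{n+1},\zeta_{n+1}) \le H(u_n,\zeta_n) - 2\Vert u_{n+1}-u_n\Vert_{\mathcal{U}}^2 - \tfrac{1}{4K^2}\bigl(\Vert \zeta_{n+1}-\zeta_{u_{n+1}}\Vert_{L^2_{\mathcal{F}_T}}^2 + \Vert \zeta_n-\zeta_{u_n}\Vert_{L^2_{\mathcal{F}_T}}^2\bigr),
\end{equation*}
together with the defining identity $H(u,\zeta) = \Phi_\gamma(u) + (1-\tfrac{1}{4K^2})\Vert \zeta-\zeta_u\Vert_{L^2_{\mathcal{F}_T}}^2$. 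Both parts should then reduce to elementary manipulations of monotone convergence and weak compactness in Hilbert spaces.

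For item 1, I would first note that the displayed inequality forces $H(u_n,\zeta_n)$ to be non-increasing. To upgrade this to convergence it is enough to produce a uniform lower bound. Since $f$ and $g$ are non-negative, $J(u)\ge 0$ a.s., while Theorem 2.2 ensures $\partial\rho(0)$ is norm-bounded in $L^\infty_{\mathcal{F}_T}$ by some constant $M$; for $\gamma\le 1/M$ one has $\zeta(1-\gamma\zeta)\ge 0$ for every $\zeta\in\partial\rho(0)$, so $\Phi_\gamma\ge 0$, and since the remainder term is also non-negative, $H\ge 0$. Hence $H(u_n,\zeta_n)\downarrow H^\ast$ for some $H^\ast\in\mathbb{R}$. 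Combining this with the limit $\Vert \zeta_n-\zeta_{u_n}\Vert_{L^2_{\mathcal{F}_T}}^2\to 0$ from Proposition 5.1, the identity defining $H$ immediately gives $\Phi_\gamma(u_n)\to H^\ast$.

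For item 2, I would exploit item 1: the convergence of $\Phi_\gamma(u_n)$ implies $\{\Phi_\gamma(u_n)\}$ is a bounded subset of $\mathbb{R}$. Reading the coercivity of $J$ as coercivity of $\Phi_\gamma$ on $\mathcal{U}$ — a transfer which is natural because, for any fixed $\zeta_0\in\partial\rho(0)$ with $\gamma\Vert \zeta_0\Vert_{L^\infty}\le 1/2$, one has a uniform lower bound $\Phi_\gamma(u)\ge \tfrac{1}{2}\espE[\zeta_0 J(u)]$ — the iterates $\{u_n\}$ are then trapped in a sublevel set of $\Phi_\gamma$ which by assumption is bounded in the Hilbert space $\mathcal{U}$. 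The Banach-Alaoglu theorem in Hilbert spaces (equivalently: closed balls are weakly compact) then places $\{u_n\}$ in a weakly compact subset of $\mathcal{U}$.

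The only subtle point I anticipate is pinning down exactly which meaning of ``coercive'' is intended for $J$ and verifying that it transfers to $\Phi_\gamma$ through the representation of $\rho$; both involve a choice of suitable density $\zeta_0\in\partial\rho(0)$ guaranteed by Theorem 2.2. Once this is clarified, the corollary follows without further analysis.
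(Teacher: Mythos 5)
Your proposal is correct and follows essentially the same route the paper intends: the corollary is read off the descent inequality for $H$ established in the proof of Proposition \ref{prop:convergence_gradient_and_step_size}, with item 1 following from monotone convergence of $H$ plus $\Vert \zeta_n - \zeta_{u_n}\Vert^2_{L^2_{\mathcal{F}_T}} \to 0$, and item 2 from coercivity trapping $u_n$ in a bounded, hence weakly compact, subset of the Hilbert space $\mathcal{U}$. Your explicit lower bound on $H$ (via $\zeta \ge 0$, $\espE[\zeta]=1$, and $\gamma$ small enough that $\zeta(1-\gamma\zeta)\ge 0$ on $\partial\rho(0)$) and the transfer of coercivity from $J$ to $\Phi_\gamma$ are exactly the details the paper leaves implicit in calling the proof straightforward.
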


To go further, one would wish to find a generalization of \cite[Theorem 1]{chen_proximal_2021} by showing that all weak limit points of our sequence $u_n$ are "gamma-critical points" and verify equation \eqref{eq:gamma_critical_points_optimality_condition}. Unfortunately, we could not prove this in the general case, as a weakly converging sequence of controls $u_n$ does not necessarily yield a converging sequence of states $x_{u_n}$. 
Nonetheless, we can demonstrate the result in the slightly more restrictive setting of linear affine controls, as shown in the next subsection.

\subsection{Convergence of the gradient in Setting B}

As mentioned previously, restricting to control affine systems allows us to use important theoretical properties and improve convergence guarantees for our gradient ascent-descent algorithm. The most important property we take advantage of is the compacity of the input-output mapping $u \mapsto X_u$, as shown in the following lemma:

\begin{lemma}\label{lem:compacity_input_output_control_affine_case}
Let $u_n$ be a sequence controls in $\mathcal{U}$, and $p \geq 2$. Under the hypothesis of Setting B, we have that 

\begin{equation*}
 u_n \overset{\mathcal{U}}{\rightharpoonup} \overline{u} \hspace{0.5em} \Rightarrow \hspace{0.5em} X_{u_n} \overset{C^p_\mathcal{F}}{\rightarrow} X_{\overline{u}}.
\end{equation*}
Consequently, if we decompose the cost $J$ of Setting B as $J(u) = C_u \Vert u \Vert^2_\mathcal{U} + J_X(X_u)$, we have 
\begin{equation*}
 u_n \overset{\mathcal{U}}{\rightharpoonup} \overline{u} \hspace{0.5em} \Rightarrow \hspace{0.5em} \nabla_u (J_X(X_{u_n})) \overset{C^2_\mathcal{F}}{\rightarrow} \nabla_u (J_X(X_{\overline{u}})).
\end{equation*}
\end{lemma}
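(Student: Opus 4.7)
The plan is to exploit a weak-to-strong compactness argument, leveraging the bounded bilinear structure of the control-affine dynamics. First, I would write $u_n = (v_n, \lambda_n)$ with the associated feedback $u_n(t,x) = v_n(t) + K_{\lambda_n}(x)$. Since any weakly convergent sequence in the Hilbert space $\mathcal{U}$ is bounded and $\mathbb{R}^q$ is finite dimensional, I may assume $\lambda_n \to \overline{\lambda}$ strongly in $\mathbb{R}^q$ and $v_n \rightharpoonup \overline{v}$ weakly in $L^2([0,T],\mathbb{R}^m)$. A standard Gronwall + BDG computation (using the uniform boundedness of $b_i$ for $i \geq 1$, the linear growth of $b_0$ and $\sigma$, and the growth bound \eqref{eq:hypotheses_borne_feedback_K_lambda_assumption} on $K_{\lambda_n}$, combined with \eqref{eq:Burkhloder_Davis_Gundy_ineq_stoch_int}) would then give $\sup_n \Vert X_{u_n} \Vert_{C^p_\mathcal{F}} < \infty$.

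The heart of the proof would lie in estimating $Y_n \triangleq X_{u_n} - X_{\overline{u}}$. Subtracting the two SDEs and decomposing each bilinear term
\[
u_{n,i}(\cdot, X_{u_n})\, b_i(\cdot, X_{u_n}) - \overline{u}_i(\cdot, X_{\overline{u}})\, b_i(\cdot, X_{\overline{u}})
\]
into a Lipschitz-in-state residual $R_n^i$ bounded by $C(1 + \Vert \lambda_n \Vert + \Vert v_n(s) \Vert)\, \Vert Y_n(s) \Vert$ and a ``free'' term
\[
A_n^i(s) = \bigl[(v_{n,i}(s) - \overline{v}_i(s)) + (K_{\lambda_n, i}(X_{\overline{u}}(s)) - K_{\overline{\lambda}, i}(X_{\overline{u}}(s)))\bigr] b_i(s, X_{\overline{u}}(s)),
\]
BDG and Gronwall applied to the resulting integral equation would reduce matters to showing
\[
\espE \Bigl[\, \sup_{0 \le t \le T} \Bigl| \int_0^t A_n^i(s)\, ds \Bigr|^p \Bigr] \longrightarrow 0 .
\]
The $K$-contribution vanishes by dominated convergence, using strong convergence of $\lambda_n$, the growth bound \eqref{eq:hypotheses_borne_feedback_K_lambda_assumption}, and uniform boundedness of $b_i$. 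For the $(v_n - \overline{v})$-part, setting $c_i(s,\omega) \triangleq b_i(s, X_{\overline{u}}(s,\omega))$ yields, for almost every $\omega$, a continuous—and hence $L^2([0,T])$—sample path, so weak convergence of $v_n - \overline{v}$ produces pointwise-in-$t$, almost-sure convergence of $\int_0^t (v_{n,i}-\overline{v}_i) c_i\, ds$ to zero. Equicontinuity of these maps in $t$, uniform in $n$ (Cauchy–Schwarz on the bounded $L^2$-norm of $v_n - \overline{v}$), combined with Arzelà–Ascoli, upgrades this to uniform-in-$t$ convergence almost surely, and the $n$-uniform bound $C \Vert b_i \Vert_\infty \Vert v_n - \overline{v}\Vert_{L^2}$ then allows passage to $L^p(\Omega)$ by dominated convergence.

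The gradient statement would then follow from Proposition \ref{prop:gradient_computation}, which expresses $\nabla_u(J_X(X_u))$ in terms of $X_u$ and the fundamental matrix $\phi_u$ of the linearized SDE \eqref{eq:linearized_sde_phi}. Once $X_{u_n} \to X_{\overline{u}}$ strongly in $C^p_\mathcal{F}$ is established, the coefficients of \eqref{eq:linearized_sde_phi} converge strongly in $L^p$, and a simpler Gronwall + BDG estimate on $\phi_{u_n} - \phi_{\overline{u}}$ (no weak convergence left to pass, since the $u_n$-dependence is now through the already-converging $X_{u_n}$) would deliver $\phi_{u_n} \to \phi_{\overline{u}}$ strongly in $C^p_\mathcal{F}$. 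The continuous, polynomial-growth dependence of $\nabla_u(J_X(X_u))$ on $(X_u, \phi_u)$ would then transfer strong convergence to the gradient.

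The main obstacle I foresee is the upgrade from pointwise-in-$t$, almost-sure convergence of $\int_0^t (v_{n,i} - \overline{v}_i)c_i\,ds$ to convergence of its supremum in $t$ inside a $p$-th moment. Weak convergence in $L^2$ does not by itself grant uniform control in the time variable, so an Arzelà–Ascoli-type equicontinuity must be combined with a dominated convergence transfer to $L^p(\Omega)$. Handling the supremum in $t$ under the expectation, rather than merely a $p$-th moment of the integral, is the delicate point that prevents the argument from reducing to a bare Hilbertian weak-strong duality pairing.
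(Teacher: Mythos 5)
Your proposal is correct and follows essentially the same route as the paper's proof: isolate the term $\int_0^t (v_{n}-\overline{v})\,b_i(s,X_{\overline{u}}(s))\,ds$ after a BDG--Gr\"onwall reduction, obtain pointwise-in-$t$ a.s.\ convergence from weak convergence of $v_n$, upgrade it to uniform-in-$t$ convergence via the uniform $\tfrac12$-H\"older equicontinuity given by Cauchy--Schwarz (the paper packages this as its Lemma~\ref{lem:uniform_convergence_holder} rather than invoking Arzel\`a--Ascoli), and conclude by dominated convergence in $\omega$; the gradient statement then follows by propagating the strong convergence through $\phi_{u_n}$ and $\phi_{u_n}^{-1}$ exactly as you describe. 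The "delicate point" you flag at the end is precisely the step the paper resolves with that H\"older-equicontinuity lemma, so your argument is complete.
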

The proof of this lemma requires some long and technical computations that we detailed in \ref{proof:compacity_of_output_affine_control_and_gradient_X}. 
This compacity result, in turn, allows us to prove the following:

\begin{proposition}
Let $\overline{u}$ be a weak limit of a subsequence of $u_n$, then for any $\overline{\zeta} \in \partial \rho (0)$ that is a weak limit of a subsequence of $\zeta_n$ we have that
\begin{equation}
    \langle \espE[ \overline{\zeta} \ \nabla J(\overline{u}) ] , h \rangle_{\mathcal{U}} = \Vert h \Vert_{\mathcal{U}} \ O( \gamma ) \quad \forall v \in \mathcal{U}.
\end{equation}

\end{proposition}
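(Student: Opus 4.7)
The plan is to extract a vanishing "approximate gradient" identity from the update rule, and then pass to the limit along the given subsequences using the compactness provided by Lemma~\ref{lem:compacity_input_output_control_affine_case}, treating the $\gamma\zeta^2$ contribution as a controlled residual.

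First, I would rewrite the $(v,\lambda)$-update in~\eqref{eq:update_rule_modified_algo} in compact form as $u_{n+1}-u_n = -\eta\,\espE[\zeta_n(1-\gamma\zeta_n)\nabla J(u_n)]$ in $\mathcal{U}$. Proposition~\ref{prop:convergence_gradient_and_step_size} gives $\|u_{n+1}-u_n\|_\mathcal{U}\to 0$, so dividing by $\eta$ yields
\begin{equation*}
\espE[\zeta_n\nabla J(u_n)] \;-\; \gamma\,\espE[\zeta_n^2\nabla J(u_n)] \;\longrightarrow\; 0 \quad \text{in } \mathcal{U}.
\end{equation*}
Testing against an arbitrary $h\in\mathcal{U}$ and using~\eqref{eq:invert_mean_expectation_random_scalar_product}, this reduces the problem to analysing the scalar sequences $\espE[\zeta_n\,\xi_n^h]$ and $\espE[\zeta_n^2\,\xi_n^h]$, where $\xi_n^h := \langle\nabla J(u_n),h\rangle_\mathcal{U}\in L^2_{\mathcal{F}_T}(\Omega,\mathbb{R})$.

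Second, I would pass to the limit along the extracted subsequences. Decomposing $J(u)=C_u\|u\|_\mathcal{U}^2+J_X(X_u)$ gives $\nabla J(u)=2C_u u+\nabla_u J_X(X_u)$; hence $\xi_n^h=2C_u\langle u_n,h\rangle_\mathcal{U}+\langle\nabla_u J_X(X_{u_n}),h\rangle_\mathcal{U}$. The first piece is deterministic and converges in $\mathbb{R}$ by $u_n\rightharpoonup\overline u$; the second converges strongly in $L^2_{\mathcal{F}_T}$ thanks to Lemma~\ref{lem:compacity_input_output_control_affine_case}. Thus $\xi_n^h\to\xi^h:=\langle\nabla J(\overline u),h\rangle_\mathcal{U}$ strongly in $L^2_{\mathcal{F}_T}(\Omega,\mathbb{R})$. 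Combined with $\zeta_n\rightharpoonup\overline\zeta$ in $L^2_{\mathcal{F}_T}(\Omega,\mathbb{R})$ and the standard weak-strong product argument
\begin{equation*}
\espE[\zeta_n\xi_n^h]-\espE[\overline\zeta\,\xi^h] \;=\; \espE[(\zeta_n-\overline\zeta)\xi^h]+\espE[\zeta_n(\xi_n^h-\xi^h)],
\end{equation*}
where $\zeta_n$ is uniformly bounded in $L^\infty_{\mathcal{F}_T}$ since $\partial\rho(0)\subset L^\infty_{\mathcal{F}_T}$, I obtain $\espE[\zeta_n\xi_n^h]\to\langle\espE[\overline\zeta\nabla J(\overline u)],h\rangle_\mathcal{U}$.

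Third, the correction term is controlled uniformly. Let $M:=\sup_{\zeta\in\partial\rho(0)}\|\zeta\|_{L^\infty}<\infty$ (Theorem~\ref{thm:coherent_risk_measure_main_properties}). The boundedness of $(u_n)$ in $\mathcal{U}$ (a consequence of the descent of $H$ in Proposition~\ref{prop:convergence_gradient_and_step_size} together with coercivity, or directly from Setting~B growth bounds) yields a uniform bound $\|\nabla J(u_n)\|_{L^2_{\mathcal{F}_T}(\Omega,\mathcal{U})}\le C$, so Cauchy--Schwarz gives $|\espE[\zeta_n^2\xi_n^h]|\le M^2 C\,\|h\|_\mathcal{U}$. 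Taking the limit of the identity obtained in the first step then produces
\begin{equation*}
\langle\espE[\overline\zeta\,\nabla J(\overline u)],h\rangle_\mathcal{U} \;=\; \gamma\lim_{n\to\infty}\espE[\zeta_n^2\,\xi_n^h] \;=\; O(\gamma)\,\|h\|_\mathcal{U},
\end{equation*}
which is the desired statement.

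The main obstacle is establishing the strong convergence of $\xi_n^h$, because weak convergence of $(u_n)$ alone would only yield weak convergence of $\nabla J(u_n)$, which is insufficient to interchange with the weak convergence of $\zeta_n$ in the product. This is exactly where Setting~B is required: Lemma~\ref{lem:compacity_input_output_control_affine_case} upgrades $u_n\rightharpoonup\overline u$ to strong convergence of $X_{u_n}$ in $C^p_\mathcal{F}$ (thanks to the control-affine structure with bounded vector fields $b_i$), and thereby to strong convergence of $\nabla_u J_X(X_{u_n})$ in $C^2_\mathcal{F}$. A secondary point that needs care is the uniform $L^2$ bound on $\nabla J(u_n)$, which should follow either from the explicit formulas of Proposition~\ref{prop:gradient_computation} combined with the a priori bounds on $(u_n)$, or by invoking boundedness of $\nabla J$ on bounded sets under the Setting~B assumptions.
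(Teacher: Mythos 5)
Your proposal is correct and follows essentially the same route as the paper's own proof: extract $\espE[\zeta_n(1-\gamma\zeta_n)\nabla J(u_n)]\to 0$ from the update rule via Proposition~\ref{prop:convergence_gradient_and_step_size}, bound the $\gamma\zeta_n^2$ term uniformly using the $L^\infty$ bound on $\partial\rho(0)$, and pass to the limit in $\espE[\zeta_n\nabla J(u_n)]$ through the decomposition $J(u)=C_u\Vert u\Vert_\mathcal{U}^2+J_X(X_u)$, Lemma~\ref{lem:compacity_input_output_control_affine_case}, and the weak convergences of $\zeta_n$ and $u_n$. The only difference is organizational (you take the limit of the full identity, while the paper splits $\langle \espE[\overline{\zeta}\nabla J(\overline{u})],h\rangle_\mathcal{U}$ into three terms and estimates each), and you are in fact slightly more explicit than the paper about where the uniform bound on $\Vert\nabla J(u_n)\Vert_{L^2}$ comes from.
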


\begin{proof}
Let us first use our result on the convergence of the general algorithm. We can see that 
\begin{equation*}
\begin{split}
    v_{n+1} - v_n & = - \eta \espE[ (\zeta_n - \gamma \zeta_n^2) \nabla_{v} J(u_n) ], \\    
    & = - \eta \espE[ \zeta_n \nabla_{v} J(u_n) ] + \eta \gamma \espE[ \zeta_n^2 \nabla_{v} J(u_n) ].
\end{split}
\end{equation*}
Since $\zeta_n$ is bounded in $L^\infty(\Omega, \mathbf{R)}$, $\nabla_{v} J(u_n)$ is bounded in $L^2(\Omega, \mathbf{R)}$, and $( v_{n+1}, \lambda_{n+1}) - ( v_{n}, \lambda_{n})$ tends to 0, there exists a constant $C > 0$ such that 
\begin{equation}\label{eq:zeta_sequence_inf_gamma_nabla_u_deter}
    \Vert \espE[ \zeta_n \nabla_{v} J(u_n) ] \Vert_{L^2(0,T)} \leq C \gamma = O(\gamma).
\end{equation}
Below, we will implicitly overload the constant $C$. Similarly, we have
\begin{equation}\label{eq:zeta_sequence_inf_gamma_nabla_lambda}
    | \espE[ \zeta_n \nabla_{\lambda} J(u_n) ] | \leq C \gamma = O(\gamma).
\end{equation}
We now go back to the equality we want to prove. For this, let $h$ be a control in $\mathcal{U}$, and let $\overline{u}$ and $\overline{\zeta}$ be weak limits of respectively $u_n$ and $\zeta_n$. We may compute
\begin{equation*}
\begin{split}
    \langle \espE[\overline{\zeta}  \nabla J(\overline{u}) ] , h \rangle_\mathcal{U} & =  \langle \espE[ (\overline{\zeta}-\zeta_n) \nabla J(\overline{u}) ] , h \rangle_\mathcal{U}  \\
    & +  \langle \espE[ \zeta_n (\nabla J(\overline{u}) - \nabla J(u_n)) ] , h \rangle_\mathcal{U}  \\
    & +  \langle \espE[ \zeta_n \nabla J(u_n) ] , h \rangle_\mathcal{U} .
\end{split}
\end{equation*} 
Using equations \eqref{eq:zeta_sequence_inf_gamma_nabla_u_deter} and \eqref{eq:zeta_sequence_inf_gamma_nabla_lambda}, we have that 
\begin{equation*}
    \vert \langle \espE[ \zeta_n  \nabla J(u_n) ], h \rangle_\mathcal{U}  \vert = \Vert h \Vert_\mathcal{U} \ O( \gamma ) \quad \forall v \in \mathcal{U}.
\end{equation*}
and since $\zeta_n$ weakly converges to $\zeta$, with an abuse of notation we may write
\begin{equation*}
     \langle \espE[\overline{\zeta} \nabla J(\overline{u}) ] , h \rangle_\mathcal{U}  =   \langle \espE[ \zeta_n (\nabla J(\overline{u}) - \nabla J(u_n)) ] , h \rangle_\mathcal{U}  + \Vert h \Vert_\mathcal{U} \ O(\gamma).
\end{equation*}
By further decomposing $J(u) = C_u \Vert u \Vert^2_\mathcal{U} + J_X(X_u)$,  we additionally compute 
\begin{equation*}
\begin{split}
     \langle \espE[\overline{\zeta} \nabla J(\overline{u}) ] , h \rangle_\mathcal{U}  & =  2 C_u  \langle \espE[\zeta_n (\overline{u} - u_n) ] , h \rangle_\mathcal{U}  \\
    & + \langle \espE[\zeta_n ( \nabla_u J_X(X_{\overline{u}}) - \nabla_u J_X(X_{u_n}) ) ] , h \rangle_\mathcal{U} + \Vert h \Vert_\mathcal{U} \ O(\gamma).    
\end{split}
\end{equation*}
Using Lemma \ref{lem:compacity_input_output_control_affine_case} and the fact that $u = (v,\lambda)$ is deterministic, again with an abuse of notation, we can write 
\begin{equation*}
    \langle \espE[\overline{\zeta} \nabla J(\overline{u}) ] , h \rangle_\mathcal{U}  =  2 C_u \espE[ \zeta_n  ] \langle \overline{u} - u_n , h \rangle_{\mathcal{U}}  + \Vert h \Vert_\mathcal{U} \ O(\gamma) ,
\end{equation*}
and thanks to the weak convergence of $u_n$ to $u^*$ we finally obtain that
\begin{equation*}
     \langle \espE[\overline{\zeta} \nabla J(\overline{u}) ], h \rangle_\mathcal{U} =  \Vert h \Vert_\mathcal{U} \ O(\gamma).
\end{equation*}

\end{proof}

This result implies that, as we make $\gamma$ tend to 0, the solutions obtained will get closer to our original problem, as summarized in the following corollary.

\begin{corollary}
    Let $\overline{u}^{\gamma}$ be a weak limit of a sequence of controls $u_n^\gamma$ obtained with the update rule \eqref{eq:update_rule_modified_algo} with parameter $\gamma$. For any decreasing sequence $(\gamma_k)_{k \in \mathbf{N}}$ tending to 0, we can define a sequence $(\overline{u}^{\gamma_k})_{k \in \mathbf{N}}$ of controls whose weak limits all verify the original optimality condition \eqref{eq:optimality_condition}.
\end{corollary}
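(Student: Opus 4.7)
My plan is to pass to the limit in the $O(\gamma_k)$ estimate supplied by the preceding proposition, using a weak--strong compactness argument that crucially relies on Lemma~\ref{lem:compacity_input_output_control_affine_case}. For each $k$, the proposition furnishes an element $\overline{\zeta}^{\gamma_k} \in \partial \rho(0)$ (a weak limit of a subsequence of the dual iterates $\zeta_n^{\gamma_k}$) such that
\begin{equation*}
    \langle \espE[\overline{\zeta}^{\gamma_k} \, \nabla J(\overline{u}^{\gamma_k})], h \rangle_{\mathcal{U}} = \|h\|_{\mathcal{U}} \, O(\gamma_k) \qquad \forall h \in \mathcal{U} .
\end{equation*}
Let $u^*$ be any weak limit of $\overline{u}^{\gamma_k}$ along a subsequence; such a subsequence exists since the term $C_u \|u\|_{\mathcal{U}}^2$ in Setting~B makes $J$ coercive, so the $\overline{u}^{\gamma_k}$ are uniformly bounded in $\mathcal{U}$. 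I would then extract a further subsequence of $\overline{\zeta}^{\gamma_k}$ converging weakly in $L^2_{\mathcal{F}_T}$ to some $\zeta^*$, which is possible because $\partial \rho(0)$ is a bounded subset of $L^{\infty}_{\mathcal{F}_T} \subset L^2_{\mathcal{F}_T}$; convexity of $\partial \rho(0)$ together with Mazur's theorem ensures $\zeta^* \in \partial \rho(0)$.

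The main work lies in passing to the limit in the bilinear expectation. I would split $J(u) = C_u \|u\|_{\mathcal{U}}^2 + J_X(X_u)$ as in the preceding proposition, use that $u = (v,\lambda)$ is deterministic, and write
\begin{equation*}
    \espE[\overline{\zeta}^{\gamma_k} \, \nabla J(\overline{u}^{\gamma_k})] = 2 C_u \, \espE[\overline{\zeta}^{\gamma_k}] \, \overline{u}^{\gamma_k} + \espE[\overline{\zeta}^{\gamma_k} \, \nabla_u J_X(X_{\overline{u}^{\gamma_k}})] .
\end{equation*}
Pairing with a fixed $h \in \mathcal{U}$, the first term converges to $2 C_u \, \espE[\zeta^*] \, \langle u^*, h \rangle_{\mathcal{U}}$ by the weak convergence of $\overline{\zeta}^{\gamma_k}$ (tested against the constant $1 \in L^2_{\mathcal{F}_T}$) and of $\overline{u}^{\gamma_k}$ in $\mathcal{U}$. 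For the second term, Lemma~\ref{lem:compacity_input_output_control_affine_case} provides the strong convergence $\nabla_u J_X(X_{\overline{u}^{\gamma_k}}) \to \nabla_u J_X(X_{u^*})$ in $C^2_{\mathcal{F}}$, hence $\langle \nabla_u J_X(X_{\overline{u}^{\gamma_k}}), h \rangle_{\mathcal{U}} \to \langle \nabla_u J_X(X_{u^*}), h \rangle_{\mathcal{U}}$ strongly in $L^2_{\mathcal{F}_T}$; combined with the weak convergence of $\overline{\zeta}^{\gamma_k}$, a standard weak--strong pairing argument in $L^2_{\mathcal{F}_T}$ yields convergence of the expectation.

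Sending $k \to \infty$, the right-hand side $\|h\|_{\mathcal{U}} \, O(\gamma_k)$ vanishes, so collecting the two limits gives
\begin{equation*}
    \langle \espE[\zeta^* \, \nabla J(u^*)], h \rangle_{\mathcal{U}} = 0 \qquad \forall h \in \mathcal{U} ,
\end{equation*}
which is precisely the optimality condition \eqref{eq:optimality_condition}. The main technical hurdle is the weak--strong pairing step: both $\overline{\zeta}^{\gamma_k}$ and $\overline{u}^{\gamma_k}$ are only known to converge weakly, and without the input--output compactness supplied by Lemma~\ref{lem:compacity_input_output_control_affine_case} (which is the reason Setting~B is required) there would be no way to turn one factor into a strong limit. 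The remaining ingredients are routine compactness and continuity considerations.
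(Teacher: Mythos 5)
The paper states this corollary without proof, treating it as an immediate consequence of the preceding proposition; your argument is the natural way to make that passage to the limit rigorous, and it is essentially sound. You correctly identify the two limit mechanisms needed: the scalar term $2C_u\,\espE[\overline{\zeta}^{\gamma_k}]\langle \overline{u}^{\gamma_k},h\rangle_{\mathcal{U}}$ passes to the limit as a product of convergent real sequences, and the term involving $\nabla_u J_X$ requires exactly the weak--strong pairing enabled by the strong convergence in Lemma~\ref{lem:compacity_input_output_control_affine_case} --- the same mechanism the paper uses inside the proof of the preceding proposition. Your use of weak compactness of the bounded convex set $\partial\rho(0)$ to extract $\zeta^*$ and keep it in $\partial\rho(0)$ is also correct.

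One point deserves to be made explicit, though it is a gap the paper shares rather than one you introduce: for the right-hand side $\Vert h\Vert_{\mathcal{U}}\,O(\gamma_k)$ to vanish as $k\to\infty$, the implied constants must be bounded uniformly in $k$. Tracing the proof of the preceding proposition, that constant is controlled by $\sup_n \Vert \nabla J(u_n^{\gamma_k})\Vert$ and the $L^\infty$ bound on $\partial\rho(0)$; the latter is independent of $\gamma$, but the former depends on the trajectory of the run with parameter $\gamma_k$ (and on the step size $\eta$, which the convergence hypotheses force to shrink with $\gamma$ through the constant $\mu$). A complete argument should either assume or establish a uniform bound on the iterates across the runs, e.g.\ via coercivity of $J$ and a $\gamma$-uniform bound on $H(u_0,\zeta_0)$. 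Apart from this caveat, which you could dispatch in a sentence under a mild uniform-boundedness assumption, your proof is correct and is more detailed than anything the paper provides.
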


This corollary indicates that if we find with our algorithm a $\gamma$-critical point for smaller and smaller values of $\gamma$, we can find a control law that verifies the necessary optimality condition. Theoretically, lowering $\gamma$ should imply lowering the gradient step $\eta$ to satisfy the assumptions in Proposition \ref{prop:convergence_gradient_and_step_size}, implying a slower convergence in practice. However, the numerical results we present in the next section show that the two convergences, i.e., with $\gamma = 0$ or $\gamma \sim 0$, are essentially equivalent, thus there is no downside in taking $\gamma > 0$ very small, e.g., in the order of the epsilon machine.



\section{Numerical Simulations}\label{sec:Numerical}

In this section, we present some simulation results. 
Our algorithm was implemented and tested on two systems, one in Setting A, and the other in Setting B. We respectively denote them as System A and System B.
By testing our algorithm on these two different systems, we can evaluate its effectiveness and robustness in solving optimal control problems in both settings.
System A is an academic non-linear system that simulates a steering problem, where a vehicle is controlled through its angular velocity while moving at a constant speed. It is subject to perturbations in the state which, for instance, may model measurement errors.
On the other hand, System B is a control affine system that models a real-world robot called AstroBee \cite{smith_astrobee_2016}, designed to operate in a zero-gravity environment, such as a space station. The diffusion term may, for instance, model sudden gusts of air randomly generated by onboard air purification systems, which destabilizes the robot. For simplicity, we designed and leveraged a very simple stochastic model to model the disturbances; note that other models exist in the literature, e.g., \cite{siurna_transient_1989, thavlov_non-linear_2015}.

\subsection{System presentation}

For both systems, the objective is to move from one point to another in a corridor while following a reference trajectory and avoiding obstacles. The reference trajectory is given by a reference control that provides a satisfying solution to the deterministic problem. However, when we add perturbations to the system, the trajectories resulting from the reference control often collide with obstacles.
To overcome this challenge, our algorithm searches for a control strategy that minimizes a risk measure of the stochastic cost function. The cost function is composed of two terms: a trajectory-tracking term that tracks the reference trajectory and a term $J_{obstacles}(X)$ that heavily penalizes trajectories that collide with obstacles. This \textit{penalization method} as presented in \cite{bonnans_numerical_2006} is a classic optimal control approach to get rid of the constraints and is widely used in the literature \cite{gilbert_distance_1985, rasekhipour_potential_2017, bonalli_gusto_2019}. 
$$
J(u) = \Vert u \Vert_\mathcal{U}^2 \ + \ \int_0^T \Vert X_u(t) - X_{ref}(t)\Vert^2 dt  \ + \ J_{obstacles}(X_u).
$$ 
The term  $J_{obstacles}(X) $ acts as a potential that is extremely high in the obstacles and thus forces the trajectories away.
The risk measure significantly weighs the worst-case scenarios, which means that the cost of a control strategy will increase substantially if some random trajectories produced by the control collide with an obstacle. A control minimizing the risk measure should therefore avoid collisions with high probability.

\subsection{Results}
As shown in Figure \ref{fig:AD_allGamma_Convergence_plots_AstroBee_vConst}, our algorithm converges arbitrarily close to solutions for both System A and System B, as long as we use a sufficiently small step size. 
Additionally, implementing the algorithm with an infinitesimal value of $\gamma$ results in the same level of convergence as when using a non-negligible value of $\gamma$. In particular, although theoretical proofs typically require $\gamma$ to be strictly greater than 0, our numerical results suggest that it may not always be necessary to achieve convergence.

\begin{figure}[ht!]
\centering
\begin{subfigure}{.45\textwidth}
  \centering
  \includegraphics[width=1.0\linewidth]{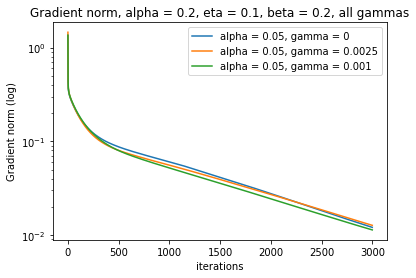}
  \caption{\centering Plot of the gradient norm over the iterations, System A, $\alpha = 0.05$}
  \label{fig:plot_ADmod_vConst_alpha005_allGamma_gradientNorm}
\end{subfigure}
\begin{subfigure}{.45\textwidth}
  \centering
  \includegraphics[width=1.0\linewidth]{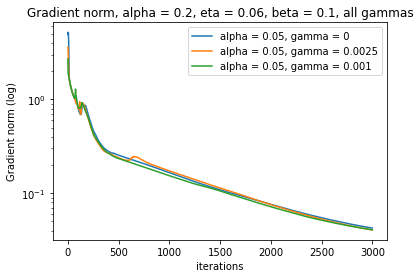}
  \caption{\centering Plot of the gradient norm over the iterations, System B, $\alpha = 0.05$}
  \label{fig:plot_AD_AstroBee_allGamma_alpha005_gradientNorm}
\end{subfigure}%
\caption{Convergence rates obtained for $\alpha = 0.05$ and different values of $\gamma$, for both the steering system (left) and the AstroBee system (right).} 
\label{fig:AD_allGamma_Convergence_plots_AstroBee_vConst}
\end{figure}

\begin{figure}[ht!]
\centering
\begin{subfigure}{.45\textwidth}
  \centering
  \includegraphics[width=1.0\linewidth]{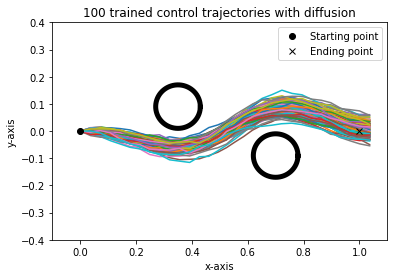}
  \caption{\centering System A: 100 trajectories, $\alpha = 0.05$, and $\gamma = 0$.}
  \label{fig:plot_vConst_100_traj_controle_trained_alpha005}
\end{subfigure}
\begin{subfigure}{.45\textwidth}
  \centering
  \includegraphics[width=1.0\linewidth]{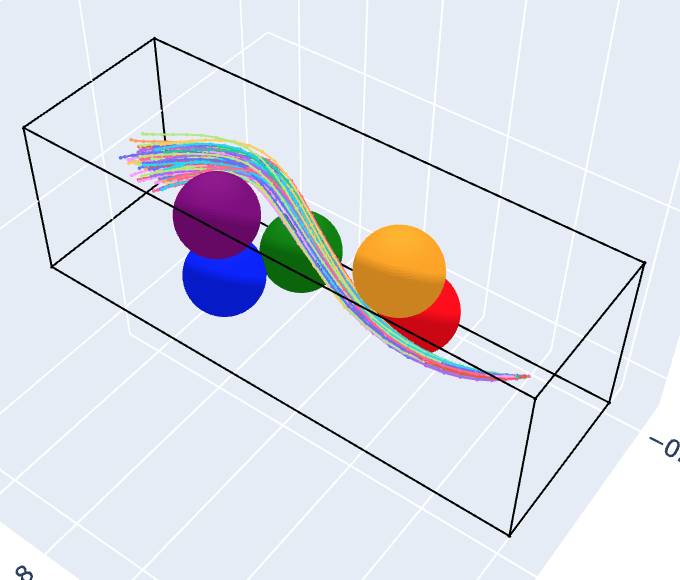}
  \caption{\centering System B: 100 trajectories, $\alpha = 0.05$, and $\gamma = 0$.}
  \label{fig:AstroBee_trajStok_alpha005_gamma0}
\end{subfigure}%
\caption{100 trajectories given by a trained control for both systems.}
\label{fig:AD_allGamma_trajectory_plots_AstroBee_vConst}
\end{figure}

In Table \ref{tab:collisions_gamma0_no_augment}, we evaluate the effectiveness of our algorithm under three different values of $\alpha$: $\alpha = 0.05$, $\alpha = 0.2$, and $\alpha = 1$ (the latter value corresponds to the minimization of the mean). By testing our algorithm under these different values of $\alpha$, we can assess the impact of different risk levels on the control strategy. Our main metric is the percentage of trajectories produced by the control strategy that resulted in collisions with obstacles. We compare these percentages between strategies obtained with different values of $\alpha$ to assess its impact on the level of risk in the control strategy.

\begin{table}[ht!]
\centering
\begin{tabular}{|T{0.2\textwidth}|c|c|c|} 
\hline  \rowcolor{lightgray}
       Systems   & $\alpha = 1$ & $\alpha = 0.2$ & $\alpha = 0.05$ \\ \hline  \rowcolor{white}
    System A & 0.353$\%$ & 0.112$\%$ & 0.011$\%$ \\ \hline
    System B & 0.049\% & 0.007\% & 0.004\% \\ \hline
\end{tabular}
\caption{Percentage of collisions with obstacles for different $\alpha$ and $\gamma = 0$. Control strategies trained with higher values of $\alpha$ yield significantly more trajectories colliding with obstacles.}
\label{tab:collisions_gamma0_no_augment}
\end{table}


To evaluate the robustness of the control strategy, we can increase the size of the obstacles to see how much the probability of collision increases. We rapidly re-trained the controls in this modified problem, with obstacles of a bigger size. 
This allows us to assess the ability of the control strategy to handle more challenging and uncertain conditions. From Table \ref{tab:collisions_Algo2_nonlinear_and_astroBee}, for both System A and System B, we observe the control solutions that minimized the risk measure with a low value of $\alpha$ are the most conservative. This results in trajectories that keep a larger distance from obstacles but also deviate more from the reference trajectory.

\begin{table}[ht!]
\begin{tabularx}{\linewidth}{C}
    \begin{tabular}{|T{0.25\textwidth}|c|c|c|}  
    \hline \rowcolor{lightgray}
        Obstacles bonus size & $\alpha = 1$ & $\alpha = 0.2$ & $\alpha = 0.05$ \\ \hline \rowcolor{white}
        +0$\%$ & 0.353$\%$ & 0.112$\%$ & 0.011$\%$ \\ \hline
        +5$\%$ & 0.367$\%$ & 0.111$\%$ & 0.011$\%$ \\ \hline
        +10$\%$ & 0.470$\%$ & 0.145$\%$ & 0.013$\%$ \\ \hline
        +15$\%$ & 0.600$\%$ & 0.177$\%$ & 0.015$\%$ \\ \hline
        +20$\%$ & 0.735$\%$ & 0.225$\%$ & 0.020$\%$ \\ \hline
    \end{tabular}
    \caption{\centering Percentage of collisions with obstacles in the steering problem.}
\\
    \begin{tabular}{|T{0.25\textwidth}|c|c|c|}  
    \hline \rowcolor{lightgray}
        Obstacles bonus size & $\alpha = 1$ & $\alpha = 0.2$ & $\alpha = 0.05$ \\ \hline \rowcolor{white}
        +0$\%$ & 0.049\% & 0.007\% & 0.004\% \\ \hline
        +5$\%$ & 0.044\% & 0.006\% & 0.004\% \\ \hline
        +10$\%$ & 0.048\% & 0.006\% & 0.006\% \\ \hline
        +15$\%$ & 0.054\% & 0.006\% & 0.006\% \\ \hline
        +20$\%$ & 0.066\% & 0.008\% & 0.006\% \\ \hline
    \end{tabular}
    \caption{\centering Percentage of collisions with obstacles in the AstroBee system.}
\end{tabularx} 
\caption{Percentage of collisions with obstacles for different $\alpha$ in the two systems.}
\label{tab:collisions_Algo2_nonlinear_and_astroBee}
\end{table}

To further assess the impact of the parameter $\alpha$ on the trade-off between mean scores and worst-case scenarios, we computed the probability densities of the costs of the obtained control strategies. By comparing the probability densities of the costs under different values of $\alpha$, we can identify the optimal level of risk for each system and obtain a control strategy that balances the trade-off between mean scores and worst-case scenarios. As expected, density distributions have higher means but smaller tails for lower values of $\alpha$. This suggests that a more conservative risk estimate leads to a more robust control strategy with lower worst-case scenarios but also a lower mean score. 

\begin{figure}[ht!]
\begin{subfigure}{.45\textwidth}
  \centering
  \includegraphics[width=1.0\linewidth]{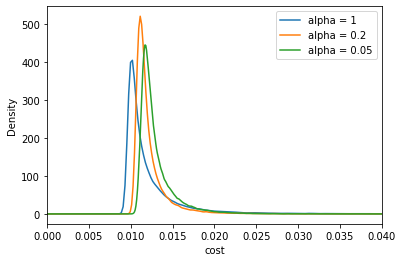}
  \caption{\centering Density of the cost for System A.}
  \label{fig:plot_AD_vConst_gamma0_allAlpha_density}
\end{subfigure}%
\begin{subfigure}{.45\textwidth}
  \centering
  \includegraphics[width=1.0\linewidth]{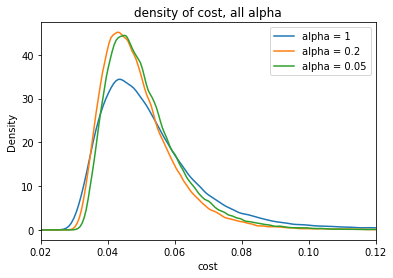}
  \caption{\centering Density of the cost for System B.}
  \label{fig:plot_AD_AstroBee_gamma0_allAlpha_density}
\end{subfigure}
\caption{Densities of the cost for different $\alpha$ and $\gamma = 0$.} 
\label{fig:plot_AD_astroBee_gamma0_allAlpha_densities}
\end{figure}

\section{Conclusion and future directions}\label{sec:Conclusion}

In this paper, we develop an algorithm to solve risk-averse stochastic optimal control problems subject to non-linear stochastic differential equations, where a risk measure replaces expectation in the cost. 
By leveraging duality results for coherent risk measures, we recast our minimization problem into a min-max problem that we solve with a gradient descent-ascent approach.
We prove convergence properties for this algorithm, which we showcase through appropriate numerical implementation on non-trivial control systems.
Our results show our algorithm genuinely converges under very general assumptions and settings. 
Importantly, our approach of minimizing the cost under a risk measure instead of the expectation yields more robust 
control strategies, which are capable of considerably better mitigating the uncertainties generated by the diffusion term in the stochastic differential equation. 

Several exciting avenues for further exploration and testing are listed hereafter. One direction would consist in extending our algorithm to systems that are modeled through more complex stochastic processes, such as processes with control variables in the diffusion or processes with jumps. Control-dependent diffusion would allow us to model systems where the uncertainties are caused by inputs as well. While putting a control term in the diffusion is already theoretically feasible, we found it to be computationally very expensive as it requires computing conditional expectations, calling for deeper investigation. In particular, recent techniques found in the literature \cite{chessari_numerical_2023} may allow us to efficiently compute these conditional expectations. 
Another direction consists of testing our approach for other risk measures than just the CV@R.
Additionally, we are interested in extending our approach to more sophisticated agents, e.g., soft robots. Soft robots have recently received a particular surge of interest as they appear in an increasing number of applications. An emerging model for the dynamics of soft robots hinges upon infinite-dimensional dynamics ruled by Partial Differential Equations (PDEs) \cite{della_santina_model-based_2023}. It would be beneficial to extend our approach to handle stochastic PDEs: we look forward to future developments in this area.


\appendix

\section{Technical proofs}
\label{sec:sample:appendix}


\subsection{Proof of Lemma~\ref{lem:frechet_differentiability_of_cost_J}}\label{proof:gradient_formula_markovian}


In this appendix, we give the proof of Lemma~\ref{lem:frechet_differentiability_of_cost_J}. We want to show the differentiability of the cost function. We assume to be dealing with the cost and dynamics in Setting A. We prove first that the function $J : \mathcal{U} \rightarrow L^2_{\mathcal{F}_T}$ is Fréchet differentiable. We also retrieve a numerically tractable formula for the Riesz representation of the differential, that we denote $\nabla J(u) = (\nabla_v J(u), \nabla_\lambda J(u))$. For the sake of clarity, we recall that if $u = (v,\lambda)$ is a control in $\mathcal{U}$, we denote $u(t)$ the value $u(t) \triangleq v(t) + K_\lambda(X_u(t))$, where $X_u$ uniquely solves \eqref{eq:base_system_nonlinear} with control \eqref{eq:markovian_control_feedback_formula_definition}. We also denote $K_\lambda'(x)$ the differential of $K_\lambda(x)$ with respect to $x$. We first start giving two preliminary lemmas.
\begin{lemma}\label{lem:estimate_l2_distance_solution_control_sys}
Under assumptions of Setting A, let $\uUn = (v_{\uUn}, \lambda_{\uUn})$ and $\uDeux = (v_{\uDeux}, \lambda_{\uDeux})$ be two controls in $\mathcal{U}$. There exists $C_{\uUn,\uDeux}$ a positive continuous function of $\| \uUn \|_{\mathcal{U}}$ and $\| \uDeux \|_{\mathcal{U}}$ such that
\begin{equation}
   \Vert X_{\uUn} - X_{\uDeux}\Vert_{C^2_\mathcal{F}}^2 \leq C_{\uUn,\uDeux}  \Vert \uUn - \uDeux \Vert_{\mathcal{U}}^2 .
\end{equation}
\end{lemma}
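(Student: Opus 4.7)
The plan is to proceed via the classical Itô--Grönwall strategy, being careful about the dependence of the effective control on the state through the feedback $K_\lambda$. First, I would write
$$
X_u(t) - X_h(t) = \int_0^t \bigl[b(s, X_u, u(s)) - b(s, X_h, h(s))\bigr]ds + \int_0^t \bigl[\sigma(s, X_u) - \sigma(s, X_h)\bigr]dW_s,
$$
take the supremum over $t \in [0,\tau]$ for arbitrary $\tau \in [0,T]$, raise to the power 2, and take expectation. The stochastic integral is handled with the Burkholder--Davis--Gundy inequality \eqref{eq:Burkhloder_Davis_Gundy_ineq_stoch_int}, while the drift contribution is treated with Cauchy--Schwarz to pass from $\sup$ to a time integral of squared norms. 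This reduces matters to bounding, uniformly in $s \le \tau$, the quantities $\espE\bigl[\|b(s, X_u(s), u(s)) - b(s, X_h(s), h(s))\|^2\bigr]$ and $\espE\bigl[\|\sigma(s, X_u(s)) - \sigma(s, X_h(s))\|^2\bigr]$.

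Next, I would exploit the Lipschitz bounds on $b$ and $\sigma$ in the state and control from Setting A. The delicate term is the control gap: writing $u(s) - h(s) = (v_u(s) - v_h(s)) + (K_{\lambda_u}(X_u(s)) - K_{\lambda_h}(X_h(s)))$ and splitting
$$
K_{\lambda_u}(X_u) - K_{\lambda_h}(X_h) = \bigl(K_{\lambda_u}(X_u) - K_{\lambda_u}(X_h)\bigr) + \bigl(K_{\lambda_u}(X_h) - K_{\lambda_h}(X_h)\bigr),
$$
assumption \eqref{eq:hypotheses_borne_feedback_K_lambda_assumption} yields a bound of the form $C(1 + \|\lambda_u\|)\|X_u - X_h\| + C(1 + \|X_h\|)\|\lambda_u - \lambda_h\|$. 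Before plugging this in, I would invoke a standard a priori estimate for SDEs with Lipschitz coefficients to control $\espE[\sup_{t \in [0,T]} \|X_h(t)\|^2]$ by a continuous function of $\|h\|_{\mathcal{U}}$; this step itself uses BDG and Grönwall on a single trajectory and is the conceptual analogue of the current lemma applied to $h \equiv 0$.

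Combining these estimates produces an inequality of the form
$$
\espE\Bigl[\sup_{t \in [0,\tau]} \|X_u(t) - X_h(t)\|^2\Bigr] \le A_{u,h}\,\|u - h\|_{\mathcal{U}}^2 + B_{u,h} \int_0^\tau \espE\Bigl[\sup_{s \in [0,r]} \|X_u(s) - X_h(s)\|^2\Bigr] dr,
$$
where $A_{u,h}, B_{u,h}$ depend continuously on $\|u\|_{\mathcal{U}}, \|h\|_{\mathcal{U}}$ through $L$, $T$, the constants in \eqref{eq:hypotheses_borne_feedback_K_lambda_assumption}, and the a priori bound on $X_h$. Grönwall's lemma then closes the argument, producing $C_{u,h} = A_{u,h} e^{B_{u,h} T}$, which inherits continuous dependence on $\|u\|_{\mathcal{U}}$ and $\|h\|_{\mathcal{U}}$.

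The main obstacle is the factor $(1+\|\lambda_u\|)$ multiplying $\|X_u - X_h\|$ in the feedback estimate, because it would ordinarily propagate through Grönwall and could blow up the constant; however it is harmless here since the conclusion only asks for a constant depending continuously on $\|u\|_{\mathcal{U}}$ and $\|h\|_{\mathcal{U}}$. The other subtle point is ensuring the a priori bound $\espE[\sup_t \|X_h\|^2]$ is continuous in $\|h\|_{\mathcal{U}}$, which follows from the same Grönwall estimate applied with $h$ against the zero control.
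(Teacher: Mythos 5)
Your proposal is correct and follows essentially the same route as the paper: integral representation of the difference, Burkholder--Davis--Gundy plus Cauchy--Schwarz, the Lipschitz assumptions on $b$, $\sigma$ and the feedback bounds \eqref{eq:hypotheses_borne_feedback_K_lambda_assumption} applied to a two-term split of $K_{\lambda_u}(X_u)-K_{\lambda_h}(X_h)$, an a priori second-moment bound on one of the trajectories, and Grönwall. The only (immaterial) differences are the symmetric choice of which intermediate point to use in the feedback split and that you derive the continuity of the a priori bound $\espE[\sup_t\|X_h\|^2]$ by an explicit Grönwall argument where the paper cites continuous dependence of SDE solutions on the dynamics.
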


\begin{proof}
Let $\uUn = (v_{\uUn}, \lambda_{\uUn})$ and $\uDeux = (v_{\uDeux}, \lambda_{\uDeux})$ be two controls in $\mathcal{U}$. The difference in the two trajectories $X_{\uUn}$ and $X_{\uDeux}$ is given by
\begin{align*}
    X_{\uUn}(s) - X_{\uDeux}(s) & = \int_0^s \left[ b(r,X_{\uUn}(r),\uUn(r)) - b(r,X_{\uDeux}(r),\uDeux(r)) \right] dr \\
    & + \int_0^s \left[ \sigma(r,X_{\uUn}(r)) - \sigma(r,X_{\uDeux}(r)) \right] dW_r.
\end{align*}
where we denote the control value $u(r) = v_u(r) + K_{\lambda_u}(X_{u}(r))$.
If we note $C > 0$ a positive constant (that will be overloaded in the rest of the proof), we can bound the $C^2_\mathcal{F}$ norm of the difference above by
\begin{equation*}
\begin{split}
& \espE \left[  \underset{0 \leq s \leq t}{\sup} \Vert X_{\uUn}(s) - X_{\uDeux}(s)\Vert^2  \right]   \leq C \espE \left[  \underset{0 \leq s \leq t}{\sup} \left\Vert \int_0^s \left[ \sigma(r,X_{\uUn}(r)) - \sigma(r,X_{\uDeux}(r)) \right] dW_r \right\Vert^2  \right]\\
& + C \espE \left[  \underset{0 \leq s \leq t}{\sup} \left\Vert \int_0^s \left[ b(r,X_{\uUn}(r),\uUn(r)) - b(r,X_{\uDeux}(r),\uDeux(r)) \right] dr \right\Vert^2  \right]. 
\end{split}
\end{equation*}
Using Cauchy-Schwartz in the deterministic integral and the \textit{Burkholder-Davis-Gundy} inequality in the stochastic integral, we get:
\begin{equation*}
\begin{split}
& \espE \left[  \underset{0 \leq s \leq t}{\sup} \Vert X_{\uUn}(s) - X_{\uDeux}(s)\Vert^2  \right]  \leq C \espE \left[  \int_0^t \Vert \sigma(r,X_{\uUn}(r)) - \sigma(r,X_{\uDeux}(r)) \Vert^2 dr  \right]\\
& + C \espE \left[  \int_0^t \Vert b(r,X_{\uUn}(r),\uUn(r)) - b(r,X_{\uDeux}(r),\uDeux(r)) \Vert^2 dr   \right]. 
\end{split}
\end{equation*}
By using the lipschitzianity of the dynamics and taking the sup in the integral, we get
\begin{equation*}
\begin{split}
\espE \left[  \underset{0 \leq s \leq t}{\sup} \Vert X_{\uUn}(s) - X_{\uDeux}(s)\Vert^2  \right] & \leq C \espE \left[  \int_0^t \underset{0 \leq r \leq s}{\sup} \Vert X_{\uUn}(r) - X_{\uDeux}(r)\Vert^2 ds  \right]\\
& + C \espE \left[  \int_0^t \Vert \uUn(s) - \uDeux(s) \Vert^2 ds   \right]. 
\end{split}
\end{equation*}
We can write
\begin{equation*}
\begin{split}
\uUn(s) - \uDeux(s) & = v_{\uUn}(s) - v_{\uDeux}(s) + K_{\lambda_{\uUn}}(X_{\uUn}(s)) - K_{\lambda_{\uDeux}}(X_{\uUn}(s)) \\ 
& + K_{\lambda_{\uDeux}}(X_{\uUn}(s)) - K_{\lambda_{\uDeux}}(X_{\uDeux}(s)),
\end{split}
\end{equation*}
and therefore have
\begin{equation*}
\begin{split}
& \espE \left[  \underset{0 \leq s \leq t}{\sup} \Vert X_{\uUn}(s) - X_{\uDeux}(s)\Vert^2  \right] \leq C \espE \left[  \int_0^t \underset{0 \leq r \leq s}{\sup} \Vert X_{\uUn}(r) - X_{\uDeux}(r)\Vert^2 ds  \right]\\
& + C \espE \left[  \int_0^t \Vert v_{\uUn}(s) - v_{\uDeux}(s) \Vert^2 ds   \right] + C \espE \left[  \int_0^t \Vert K_{\lambda_{\uUn}}(X_{\uUn}(s)) - K_{\lambda_{\uDeux}}(X_{\uUn}(s)) \Vert^2 ds   \right] \\
& + C \espE \left[  \int_0^t \Vert K_{\lambda_{\uDeux}}(X_{\uUn}(s)) - K_{\lambda_{\uDeux}}(X_{\uDeux}(s)) \Vert^2 ds   \right].
\end{split}
\end{equation*}
Using the assumptions on $K_\lambda(X)$ in \eqref{eq:hypotheses_borne_feedback_K_lambda_assumption}, we obtain that
\begin{equation*}
\begin{split}
& \espE \left[  \underset{0 \leq s \leq t}{\sup} \Vert X_{\uUn}(s) - X_{\uDeux}(s)\Vert^2  \right] \leq  C \espE \left[  \int_0^t \underset{0 \leq r \leq s}{\sup} \Vert X_{\uUn}(r) - X_{\uDeux}(r)\Vert^2 ds  \right]\\
& + C  \int_0^t \Vert v_{\uUn}(s) - v_{\uDeux}(s) \Vert^2 ds  +  C \left( 1 + \espE \left[ \int_0^t \Vert X_{\uUn}(s) \Vert^2 ds \right] \right)  \Vert \lambda_{\uUn} - \lambda_{\uDeux} \Vert^2  \\
& + C (1 + \Vert \lambda_{\uDeux} \Vert^2) \espE \left[  \int_0^t \Vert X_{\uUn}(s) - X_{\uDeux}(s)\Vert^2 ds  \right].
\end{split}
\end{equation*}
The dynamics of the SDE \eqref{eq:base_system_nonlinear} are continuous in $u = (v,\lambda)$ and the solution $X_u$ changes continuously with the dynamics \cite{yong_stochastic_1999}, implying that $\espE \left[ \int_0^t \Vert X_{\uUn}(s) \Vert^2 ds \right]$ is a continuous function of $u$. We may therefore introduce $C_{\uUn,\uDeux}$, an overloaded constant that depends continuously on $u$ and $h$. From this remark, by taking the sup in the integral above, we may infer that
\begin{equation*}
\begin{split}
 \espE \left[  \underset{0 \leq s \leq t}{\sup} \Vert X_{\uUn}(s) - X_{\uDeux}(s)\Vert^2  \right] \leq & C_{\uUn,\uDeux} \espE \left[  \int_0^t \underset{0 \leq r \leq s}{\sup} \Vert X_{\uUn}(r) - X_{\uDeux}(r)\Vert^2 ds  \right]\\
& + C_{\uUn,\uDeux} \left[  \int_0^t \Vert v_{\uUn}(s) - v_{\uDeux}(s) \Vert^2 ds  +  \Vert \lambda_{\uUn} - \lambda_{\uDeux} \Vert^2  \right].
\end{split}
\end{equation*}
Finally, the Gronwall lemma applied to $t \mapsto \underset{0 \leq s \leq t}{\sup} \Vert X_{\uUn}(s) - X_{\uDeux}(s)\Vert^2$ yields
\begin{equation*}
\espE \left[  \underset{0 \leq s \leq t}{\sup} \Vert X_{\uUn}(s) - X_{\uDeux}(s)\Vert^2  \right] \leq C_{\uUn,\uDeux} \Vert \uUn(s) - \uDeux(s) \Vert_\mathcal{U}^2. 
\end{equation*}
\end{proof}


In what follows, we denote $y_{\uUn,\uDeux}$ the unique element of $L^2_{\mathcal{F}}(\Omega \times [0,T] , \mathbf{R}^n)$ that verifies the linearized SDE

\begin{lemma}\label{lem:approximation_deltaX_par_Y}
Let $\uUn = (v_{\uUn}, \lambda_{\uUn})$ and $\uDeux = (v_{\uDeux}, \lambda_{\uDeux})$ be two controls in $\mathcal{U}$, let us consider the following linear SDE with stochastic dynamics:
\begin{equation*}
\left\{
    \begin{array}{ll}
    dY(t) & = [\partial_x b(t,X_{\uUn}(t),\uUn(t)) + \partial_u b(t,X_{\uUn}(t),\uUn(t)) K_{\lambda_{\uUn}}'(X_{\uUn}(t))] Y(t) \\ 
    & + \partial_u b(t,X_{\uUn}(t),\uUn(t)) [ v_{\uDeux}(t) + \nabla_\lambda K_{\lambda_{\uUn}}(X_{\uUn}(t)) \lambda_{\uDeux} ] dt  \\ 
    & + \partial_x \sigma(t,X_{\uUn}(t)) Y(t) dW_t, \\
    Y(0) & = 0.
    \end{array}
\right.
\end{equation*}
This equation has a unique solution denoted $y_{\uUn,\uDeux} \in L^2_{\mathcal{F}}(\Omega \times [0,T] , \mathbf{R}^n)$. This solution can be written explicitly as 
\begin{align*}
    y_{\uUn,\uDeux}(t) = \phi_{\uUn}(t) \int_0^t \phi_{\uUn}^{-1}(s) \partial_u b(s, X_{\uUn}(s), u(s)) [ v_{\uDeux}(s) + \nabla_\lambda K_\lambda(X_{\uUn}(s)) \lambda_{\uDeux} ] ds
\end{align*}
where $\phi_{\uUn}$, defined in \eqref{eq:linearized_sde_phi}, is the resolvant of the linear SDE \eqref{eq:linearized_sde_phi}. Consider $\epsilon$ a positive constant and denote $\delta X_{\uUn,\epsilon, \uDeux} \triangleq X_{\uUn + \epsilon \uDeux} - X_{\uUn}$. Then, 
\begin{equation*}
\Vert \delta X_{\uUn,\epsilon, \uDeux} - \epsilon y_{\uUn,\uDeux} \Vert_{C^2_\mathcal{F}} \leq C(\epsilon) \epsilon \Vert \uDeux \Vert_{L^2_\mathcal{F}} = o(\epsilon)
\end{equation*}
where $C(\epsilon) > 0$ is a real positive function of $\epsilon$ that tends to zero as $\epsilon \to 0$.
\end{lemma}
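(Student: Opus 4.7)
The plan is to write the residual $R_\epsilon(t) \triangleq \delta X_{u,\epsilon,h}(t) - \epsilon\, y_{u,h}(t)$ as the solution of a linear SDE driven by explicitly controllable remainder terms, then conclude via a standard Burkholder-Davis-Gundy plus Gronwall estimate. Subtracting $\epsilon$ times the linearized SDE from the SDE satisfied by $\delta X_{u,\epsilon,h}$ and performing a first-order Taylor expansion of $b$ around $(X_u(s),u(s))$ and of $\sigma$ around $X_u(s)$, I would write
\[
b(s, X_{u+\epsilon h}, (u+\epsilon h)(s)) - b(s, X_u, u(s)) = \partial_x b \cdot \delta X_{u,\epsilon,h} + \partial_u b \cdot \delta u_\epsilon + R_b,
\]
with $|R_b| \leq C(|\delta X_{u,\epsilon,h}|^2 + |\delta u_\epsilon|^2)$ by Lipschitzianity of $\partial b$, and $\sigma(s, X_{u+\epsilon h}) - \sigma(s, X_u) = \partial_x \sigma \cdot \delta X_{u,\epsilon,h} + R_\sigma$ with $|R_\sigma| \leq C|\delta X_{u,\epsilon,h}|^2$.

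The variation $\delta u_\epsilon(s) = (u+\epsilon h)(s) - u(s)$ involves the feedback, so a second Taylor expansion of $(\lambda, x) \mapsto K_\lambda(x)$ around $(\lambda_u, X_u(s))$ gives
\[
\delta u_\epsilon = \epsilon v_h + \epsilon\, \nabla_\lambda K_{\lambda_u}(X_u)\, \lambda_h + K'_{\lambda_u}(X_u)\, \delta X_{u,\epsilon,h} + R_K,
\]
with $|R_K| \leq C(\epsilon^2|\lambda_h|^2 + |\delta X_{u,\epsilon,h}|^2)$ thanks to \eqref{eq:lipschitzianity_of_feedback_K}. Substituting into the SDE for $\delta X_{u,\epsilon,h}$, the first-order terms cancel exactly against $\epsilon\, y_{u,h}$ and leave the linear SDE
\[
\begin{cases}
dR_\epsilon(t) = \bigl[A(t)\, R_\epsilon(t) + r_\epsilon^b(t)\bigr] dt + \bigl[\partial_x \sigma(t, X_u(t))\, R_\epsilon(t) + r_\epsilon^\sigma(t)\bigr] dW_t, \\
R_\epsilon(0) = 0,
\end{cases}
\]
where $A(t) = \partial_x b + \partial_u b \cdot K'_{\lambda_u}(X_u)$ is bounded and the source terms are $r_\epsilon^b = R_b + \partial_u b \cdot R_K$ and $r_\epsilon^\sigma = R_\sigma$.

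Next I would upgrade Lemma \ref{lem:estimate_l2_distance_solution_control_sys} to a $C^4_\mathcal{F}$ estimate, which is a routine reprise of its proof (the same BDG-plus-Gronwall pipeline applied with $p=4$), obtaining $\|\delta X_{u,\epsilon,h}\|_{C^4_\mathcal{F}}^4 \leq C_{u,h}\,\epsilon^4$. Combined with $|\delta u_\epsilon| \leq C(\epsilon + |\delta X_{u,\epsilon,h}|)$ obtained from \eqref{eq:hypotheses_borne_feedback_K_lambda_assumption}, the remainder bounds then yield
\[
\espE\!\left[\int_0^T |r_\epsilon^b(t)|^2 + |r_\epsilon^\sigma(t)|^2\, dt\right] \leq C_{u,h}\,\epsilon^4.
\]
Applying Cauchy-Schwarz on the drift and \eqref{eq:Burkhloder_Davis_Gundy_ineq_stoch_int} on the diffusion to the linear SDE for $R_\epsilon$ and closing with Gronwall's inequality on $t \mapsto \espE[\sup_{s \leq t} |R_\epsilon(s)|^2]$ finally yields $\|R_\epsilon\|_{C^2_\mathcal{F}}^2 = O(\epsilon^4)$, which is in particular $o(\epsilon)$ as claimed.

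The main obstacle is not any single estimate but rather the bookkeeping of the Taylor step: because $K_\lambda(X_u)$ couples the $\lambda$ and $X$ perturbations, the expansions of $b$ and of $K$ must be composed carefully so that the drift $[\partial_x b + \partial_u b\, K'_{\lambda_u}] y_{u,h} + \partial_u b\,[v_h + \nabla_\lambda K_{\lambda_u}\,\lambda_h]$ of the linearized equation lines up term-by-term with the first-order part of the Taylor expansion of the full dynamics. A secondary, purely technical point is that the quadratic remainders $R_b, R_\sigma, R_K$ force $L^4$ control on $\delta X_{u,\epsilon,h}$ rather than the $L^2$ control given by Lemma \ref{lem:estimate_l2_distance_solution_control_sys}; the upgrade is immediate but easy to overlook.
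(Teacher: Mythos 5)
Your overall strategy --- subtract the linearized equation, Taylor-expand $b$, $\sigma$ and $K$, and close with a Burkholder--Davis--Gundy plus Gr\"onwall estimate on the residual $R_\epsilon$ --- is exactly the route the paper takes (its proof of the approximation part literally defers to ``the same steps'' as Lemma~\ref{lem:estimate_l2_distance_solution_control_sys}), and your bookkeeping of how the feedback couples the $\lambda$- and $X$-perturbations is more explicit than the paper's own sketch. However, two points need attention. First, you never address the first half of the statement: existence, uniqueness and the explicit variation-of-constants formula for $y_{\uUn,\uDeux}$. The paper disposes of this by writing the equation as $dY=(A(t)Y+r(t))dt+C(t)Y\,dW_t$, checking via \eqref{eq:hypotheses_borne_feedback_K_lambda_assumption} that $A=\partial_x b+\partial_u b\,K_{\lambda_{\uUn}}'(X_{\uUn})$ and $C=\partial_x\sigma$ are essentially bounded while $r\in L^2_{\mathcal{F}}$, and then invoking the standard theory of linear SDEs; this is routine but it is part of the claim and must appear.

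Second, and more substantively, your quantitative bound $\espE[\int_0^T |r_\epsilon^b(t)|^2\,dt]\le C_{\uUn,\uDeux}\,\epsilon^4$ fails for a general perturbation $\uDeux\in\mathcal{U}$. The Taylor remainder $R_b$ is quadratic in $|\delta u_\epsilon|$, and $\delta u_\epsilon$ contains the term $\epsilon v_{\uDeux}(t)$ with $v_{\uDeux}$ only in $L^2([0,T],\mathbb{R}^m)$; squaring $R_b$ therefore produces a contribution of order $\epsilon^4\int_0^T\Vert v_{\uDeux}(t)\Vert^4\,dt$, which may be infinite. This is precisely why the lemma asserts only $C(\epsilon)\,\epsilon\,\Vert\uDeux\Vert$ with $C(\epsilon)\to 0$, i.e.\ $o(\epsilon)$, rather than the $O(\epsilon^2)$ you claim. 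The repair is to use both bounds available on the remainder, namely $|R_b|\le C\min(q,q^2)$ with $q=|\delta X_{\uUn,\epsilon,\uDeux}|+|\delta u_\epsilon|$, the linear bound coming from the uniform bound $\Vert\partial b\Vert\le L$ rather than from the Lipschitzianity of $\partial b$, and then to conclude $\espE[\int_0^T|r_\epsilon^b|^2\,dt]=o(\epsilon^2)$ by dominated convergence (or by splitting on the event $\{q\le 1\}$). With that correction the Gr\"onwall step delivers the stated $o(\epsilon)$ estimate; your $L^4$ upgrade of Lemma~\ref{lem:estimate_l2_distance_solution_control_sys} remains necessary for the state-dependent part of the remainder and is correct as described.
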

\begin{proof}
The process $Y$ is the solution of a linear equation of the form
$$
dY(t) = ( A(t) Y(t) + r(t))dt + C(t) Y(t) dW_t,
$$
where $A(t), C(t)$ and $r(t)$ are $\mathcal{F}_t$-adapted processes. If $A(t)$ and $C(t)$ are $L^\infty_{\mathcal{F}}(\Omega \times [0,T])$, and $r(t)$ is $L^2_{\mathcal{F}}(\Omega \times [0,T] )$, then \cite{yong_linear_2006} ensures us that there exists a unique strong solution that can be written explicitly. The assumptions made on $K_\lambda(x)$ ensure us that
\begin{align*}
& \Vert K_\lambda'(x) \Vert \leq C ( 1 + \Vert \lambda \Vert) , 
& \Vert \nabla_\lambda K_\lambda(x) \Vert \leq C ( 1 + \Vert x \Vert),
\end{align*}
which in turn implies
\begin{align*}
\Vert A \Vert_\infty & \leq  \Vert \partial_x b \Vert_\infty + C \Vert \partial_u b \Vert_\infty (1 + \Vert \lambda_u \Vert) \\
\Vert C \Vert_\infty & \leq  \Vert \partial_x \sigma \Vert_\infty \\
\Vert r \Vert_{L^2} & \leq  \Vert \partial_u b \Vert_\infty  (  \Vert v_h \Vert_{L^2} + C (1 + \Vert X_u \Vert_{C^2_\mathcal{F}}) \Vert \lambda_h \Vert )
\end{align*}
The solution therefore exists and can be written explicitly.

As for the approximation, the functions $\delta X_{\uUn,\epsilon, \uDeux}$ and $y_{\uUn,\uDeux}$ follow the same SDE up to a $o(\epsilon)$ term. Therefore, the proof follows the exact same steps as the proof of Lemma \ref{lem:estimate_l2_distance_solution_control_sys}, 
which we then avoid detailing here. 
\end{proof}

We can now move to the proof of Lemma~\ref{lem:frechet_differentiability_of_cost_J}. To prove differentiability, we start by estimating the difference
\begin{align*}
    \delta J_{\uUn,\epsilon, \uDeux} & =  J(\uUn + \epsilon \uDeux) - J(\uUn) \\
    & = \int_0^T [ f(t,X_{\uUn+\epsilon \uDeux}(t),v_{\uUn}(t)+\epsilon v_{\uDeux}(t) + K_{\lambda_{\uUn} + \epsilon \lambda_{\uDeux}}(X_{\uUn + \epsilon \uDeux}(t)))  \\
    & - f(t,X_{\uUn}(t), v_{\uUn}(t) + K_{\lambda_u}(X_{\uUn}(t)))]   dt + g(T, X_{\uUn + \epsilon \uDeux}(T)) - g(T, X_{\uUn}(T)) \\ 
    & = \int_0^T \partial_x f(t,X_{\uUn}(t), \uUn(t)) \delta X_{\uUn,\epsilon, \uDeux}(t)  dt  \\
    & + \int_0^T \partial_u f(t,X_{\uUn}(t), \uUn(t)) [ \epsilon v_{\uDeux}(t) + K_{\lambda_{\uUn} + \epsilon \lambda_{\uDeux}}(X_{\uUn + \epsilon \uDeux}(t)) - K_{\lambda_u}(X_{\uUn}(t)) ]  dt \\
    & +  \partial_x g(T, X_{\uUn}(T)) \delta X_{\uUn,\epsilon, \uDeux}(T) + o(\epsilon) + o(\delta X_{\uUn,\epsilon, \uDeux}).
\end{align*}
where $\delta X_{\uUn,\epsilon, \uDeux} \triangleq X_{\uUn + \epsilon \uDeux} - X_{\uUn}$. Due to Lemma \ref{lem:estimate_l2_distance_solution_control_sys}, we have
\begin{equation*}
    \Vert X_{\uUn+ \epsilon \uDeux} - X_{\uUn} \Vert^2_{C^2_\mathcal{F}} \leq C \epsilon^2 \Vert \uDeux \Vert^2_{\mathcal{U}},
\end{equation*}
where $C$ is a positive constant, meaning that $o(\delta X_{\uUn,\epsilon, \uDeux}) = o(\epsilon)$.
Using this latter fact, and the assumption that $(\lambda, x) \mapsto K_\lambda(x)$ has lipschitz gradient, meaning it verifies \eqref{eq:lipschitzianity_of_feedback_K}, we may use the Taylor expansion of  $K_\lambda(x)$ and write 
$$
K_{\lambda_{\uUn} + \epsilon \lambda_{\uDeux}}(X_{\uUn + \epsilon \uDeux}(t)) - K_{\lambda_u}(X_{\uUn}(t)) = K_{\lambda_u}'(X_{\uUn}(t)) \delta X_{\uUn,\epsilon, \uDeux}(t) + \nabla_\lambda K_{\lambda_u}(X_{\uUn}(t)) \epsilon \lambda_{\uDeux} + o(\epsilon) ,
$$ 
and then finally compute
\begin{align*}
    \delta J_{\uUn,\epsilon, \uDeux} & = \int_0^T [ \partial_x f(t,X_{\uUn}(t), \uUn(t)) + \partial_u f(t,X_{\uUn}(t), \uUn(t)) K_\lambda'(X_{\uUn}(t)) ] \delta X_{\uUn,\epsilon, \uDeux}(t)  dt  \\
    & + \int_0^T \partial_u f(t,X_{\uUn}(t), \uUn(t)) \epsilon [ v_{\uDeux}(t) + \nabla_\lambda K_\lambda(X_{\uUn}(t)) \lambda_{\uDeux} ]  dt \\
    & +  \partial_x g(T, X_{\uUn}(T)) \delta X_{\uUn,\epsilon, \uDeux}(T) + o(\epsilon).
\end{align*}
We now need an estimate for $\delta X_{\uUn,\epsilon, \uDeux}$ that is linear in $\epsilon \uDeux$. In order to obtain this latter, we first notice that $\delta X_{\uUn,\epsilon, \uDeux}$ solves the following SDE
\begin{equation*}
\begin{split}
d\delta X_{\uUn,\epsilon, \uDeux}(t) & = [ \partial_x b(t,X_{\uUn}(t),\uUn(t)) + \partial_u b(t,X_{\uUn}(t),\uUn(t)) K_\lambda'(X_{\uUn}(t))] \delta X_{\uUn,\epsilon, \uDeux}(t) dt \\
& + \partial_u b(t, X_{\uUn}(t), \uUn(t)) \epsilon [ v_{\uDeux}(t) + \nabla_\lambda K_\lambda(X_{\uUn}(t)) \lambda_{\uDeux} ] dt +  o(\epsilon) dt  \\
& + \left[ \partial_x \sigma(t, X_{\uUn}(t)) \delta X_{\uUn,\epsilon, \uDeux}(t) + o(\epsilon) \right] dW_t.
\end{split}
\end{equation*}
Lemma \ref{lem:approximation_deltaX_par_Y} yields that the solution of the linearized SDE is indeed a good approximation.
The gradients of $f$ and $g$ are bounded, we can therefore replace $\delta X_{\uUn,\epsilon, \uDeux}$ by the new estimate $\epsilon y_{\uUn,\uDeux}$ and obtain
\begin{align*}
    \delta J_{\uUn,\epsilon, \uDeux} & = \int_0^T [ \partial_x f(t,X_{\uUn}(t), \uUn(t)) + \partial_u f(t,X_{\uUn}(t), \uUn(t)) K_\lambda'(X_{\uUn}(t)) ] \epsilon y_{\uUn,\uDeux}(t)  dt  \\
    & + \int_0^T \partial_u f(t,X_{\uUn}(t), \uUn(t)) \epsilon [ v_{\uDeux}(t) + \nabla_\lambda K_\lambda(X_{\uUn}(t)) \lambda_{\uDeux} ]  dt \\
    & +  \partial_x g(T, X_{\uUn}(T)) \epsilon y_{\uUn,\uDeux}(T) + o(\epsilon).
\end{align*}
By injecting the explicit formula of $y_{\uUn,\uDeux}$ and inverting the integral signs, we finally obtain
\begin{equation}\label{eq:differential_J_infinitesimal_difference}
\begin{split}
    \delta J_{\uUn,\epsilon, \uDeux} & = \epsilon \int_0^T I_{\uUn}(s) \phi_{\uUn}^{-1}(s) \partial_u b(s, X_{\uUn}(s), u(s)) [ v_{\uDeux}(s) + \nabla_\lambda K_\lambda(X_{\uUn}(s)) \lambda_{\uDeux} ] ds \\
    & + o(\epsilon),
\end{split}
\end{equation}
where $I_{\uUn}$ is the term
\begin{equation*}
\begin{split}
    I_{\uUn}(s) & \triangleq \partial_x g(T,X_{\uUn}(T)) \phi_{\uUn}(T) \\ 
    & + \int_s^T [ \partial_x f(t,X_{\uUn}(t), \uUn(t)) + \partial_u f(t,X_{\uUn}(t), \uUn(t)) K_\lambda'(X_{\uUn}(t)) ] \phi_{\uUn}(t) dt.
\end{split}
\end{equation*}
The function $u=(v,\lambda) \mapsto J$ is therefore Fréchet differentiable and we can compute the Riesz representation of its gradient, denoted $\nabla J(u)$. We recall that $\nabla J(u) = (\nabla_v J(u), \nabla_\lambda J(u))$ is the random variable such that
$$
\underset{\epsilon \rightarrow 0}{ \lim } \frac{\delta J_{u,\epsilon, h}(\omega)}{\epsilon} = \langle \nabla J(u)(\omega) , h  \rangle_{\mathcal{U}} \quad \forall h \in \mathcal{U}.
$$
From the expression of $\delta J_{\uUn,\epsilon, \uDeux}$ in \eqref{eq:differential_J_infinitesimal_difference} we can directly identify the gradient as
\begin{align*}
    & \nabla_v J(v,\lambda)(t) = I_{u}(t) \phi_{u}^{-1}(t) \partial_{u} b(t, X_{u}(t), u(t)) \\
    & \nabla_\lambda J(v,\lambda) = \int_0^T I_{u}(s) \phi_{u}^{-1}(s) \partial_u b(s, X_{u}(s), u(s)) \nabla_\lambda K_\lambda(X_{u}(s))  ds.
\end{align*}

\subsection{Proof of Proposition~\ref{prop:optimality_condition}}\label{proof:optimality_condition}

Let $\rho$ be a finite coherent risk measure, $\mathcal{U}$ and $\partial \rho(0)$ are both convex subsets of a Hilbert space. As shown in \ref{lem:frechet_differentiability_of_cost_J}, $u \mapsto J(u)$ is also Frechet differentiable. Additionally, the risk function is Hadamard differentiable (H-differentiable) as stated in Theorem \ref{thm:coherent_risk_measure_main_properties}, and thus $u \mapsto \rho(J(u))$ is H-differentiable. From \cite[Chapter 3.1]{bonnans_perturbation_2000}, we obtain the following optimality condition for H-differentiable functions:
\begin{proposition}\label{prop:optimality_hdiff}
Let $X$ be a convex subset of a Banach space and $f : X \mapsto \mathbf{R}$ an H-differentiable function. If $f$ has a minimum on $X$ reached at $x^*$, it is necessary for $x^*$ to verify:
$$
f'(x^*) \cdot (x - x^*) \geq 0 , \quad \forall x \in X.
$$
\end{proposition}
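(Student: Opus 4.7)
The plan is to exploit convexity of $X$ to build admissible line segments through $x^*$, use the minimality of $x^*$ to get a one-sided difference quotient inequality, and then pass to the limit using Hadamard directional differentiability.

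More precisely, fix an arbitrary $x \in X$ and consider the direction $h = x - x^*$. By convexity of $X$, for every $t \in [0,1]$ the point $x^* + t h = (1-t) x^* + t x$ lies in $X$. Since $x^*$ is a minimum of $f$ on $X$, one has $f(x^* + t h) \geq f(x^*)$ for every such $t$, hence
\begin{equation*}
\frac{f(x^* + t h) - f(x^*)}{t} \geq 0 , \quad \forall t \in (0,1] .
\end{equation*}

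The next step is to let $t \to 0^+$. Since $f$ is Hadamard (directionally) differentiable at $x^*$, the directional derivative
\begin{equation*}
f'(x^*) \cdot h \ = \ \lim_{t \to 0^+} \frac{f(x^* + t h) - f(x^*)}{t}
\end{equation*}
exists, and preservation of the non-strict inequality under the limit yields $f'(x^*) \cdot (x - x^*) \geq 0$. Because $x \in X$ was arbitrary, this establishes the claim.

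The argument is essentially routine; there is no real obstacle. The only subtle point is that Hadamard directional differentiability a priori involves a double limit (over $t \to 0^+$ and over perturbations of the direction), but for the fixed admissible direction $h = x - x^*$ it implies, in particular, the existence of the one-sided Gateaux derivative along $h$, which is exactly what is used to pass to the limit above. No properties beyond convexity of $X$ and directional differentiability of $f$ are needed; in particular we do not use the structure of $\rho$ or $J$ here, since Proposition~\ref{prop:optimality_hdiff} is a generic abstract statement to be applied later in the proof of Proposition~\ref{prop:optimality_condition}.
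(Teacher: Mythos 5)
Your proof is correct. The paper does not actually prove this proposition; it simply imports it from Chapter 3.1 of Bonnans--Shapiro, so your argument is the standard derivation that the cited reference contains: convexity of $X$ makes $x^* + t(x-x^*)$ admissible for $t \in (0,1]$, minimality gives a nonnegative difference quotient, and Hadamard directional differentiability (which in particular yields the one-sided Gateaux derivative along the fixed direction $x - x^*$) lets you pass to the limit. Nothing is missing.
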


Let $u^*$ be a solution to Problem \eqref{eq:general_OCP}. Therefore, the previous proposition yields:
$$
-\rho'(J(u^*)) \cdot D_u J(u^*) \cdot (u-u^*)  \le 0 , \quad \forall u \in \mathcal{U}.
$$
In particular, from the formula of the differential of $\rho$ given in Theorem  \ref{thm:coherent_risk_measure_main_properties} and from the chain rule for directional differentials, we obtain that:
$$
\underset{\zeta \in \partial \rho(J(u^*))}{\inf} \ \espE\left[-\zeta \  D_u J(u^*) \cdot (u-u^*) \right] \le 0 , \quad \forall u \in \mathcal{U}.
$$
By using the Riesz representation of $D_u J(u^*)$ and using equation \eqref{eq:invert_mean_expectation_random_scalar_product} we get
\begin{equation}\label{eq:optimality_cond_temp0}
\underset{\zeta \in \partial \rho(J(u^*))}{\inf} \ \langle \espE\left[-\zeta \ \nabla J(u^*) \right] , u-u^* \rangle_\mathcal{U}  \le 0 , \quad \forall u \in \mathcal{U}.
\end{equation}
By taking the supremum of \eqref{eq:optimality_cond_temp0} over $u \in \mathcal{U}$, we infer that
\begin{equation} \label{eq:optimality_cond_temp2}
\underset{u \in \mathcal{U}}{\sup} \underset{\zeta \in \partial \rho(J(u^*))}{\inf} \langle \espE\left[-\zeta \ \nabla J(u^*) \right] , u-u^* \rangle_\mathcal{U} \le 0 . 
\end{equation}
Now, we need to invert the sup with the inf in the above expression, and for this, we use the following version of Sion min-max theorem \cite{pardalos_minimax_1995}:
\begin{theorem}\label{thm:sion_maxmin}
Let $X$ and $Y$ be two convex subsets of Haussdorff topological spaces with X being compact, and consider a continuous map $f$ such that
\begin{itemize}
    \item $x \mapsto f(x,y)$ is convex in $X$ for all $y \in Y$,
    \item $y \mapsto f(x,y)$ is concave in $Y$ for all $x \in X$.
\end{itemize}
Then, it holds that
$$
\normalfont \underset{y \in Y}{\sup} \ \underset{x \in X}{\inf} \ f(x,y) = \underset{x \in X}{\inf} \ \underset{y \in Y}{\sup} \ f(x,y) .
$$
\end{theorem}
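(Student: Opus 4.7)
The plan is to prove the two inequalities separately. The easy direction $\sup_{y \in Y} \inf_{x \in X} f(x,y) \leq \inf_{x \in X} \sup_{y \in Y} f(x,y)$ requires none of the hypotheses: for arbitrary $(x_0,y_0) \in X \times Y$, one has $\inf_{x} f(x,y_0) \leq f(x_0,y_0) \leq \sup_{y} f(x_0,y)$, and then taking $\sup_{y_0}$ on the left and $\inf_{x_0}$ on the right yields the claim.

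For the nontrivial direction, I would argue by contradiction. Suppose there exists $c \in \mathbb{R}$ with $\sup_{y}\inf_{x} f(x,y) < c < \inf_{x}\sup_{y} f(x,y)$. For each $y \in Y$, define the sublevel set
$$
C_y \triangleq \{\, x \in X : f(x,y) \leq c \,\}.
$$
By continuity of $f$, each $C_y$ is closed in the compact space $X$; by convexity of $x \mapsto f(x,y)$ it is convex; and by our choice of $c$, since $\inf_{x} f(x,y) \leq \sup_{y}\inf_{x} f(x,y) < c$, it is nonempty. If I can show that the family $\{C_y\}_{y \in Y}$ enjoys the finite intersection property, compactness of $X$ produces a point $x_\star \in \bigcap_{y \in Y} C_y$, and then $\sup_{y} f(x_\star,y) \leq c$, contradicting $c < \inf_{x}\sup_{y} f$.

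The finite intersection property would be established by induction on the number of sets involved. The base two-point case is the crux: given $y_1, y_2 \in Y$, show $C_{y_1} \cap C_{y_2} \neq \emptyset$. This is where the concavity of $y \mapsto f(x,y)$ is essential. The standard approach is to fix an intermediate value $c' \in (\sup_{y}\inf_{x} f,\, c)$, parametrize the segment $y_t = (1-t)y_1 + t y_2 \in Y$ for $t \in [0,1]$, and consider the nonempty closed convex sets $A_t \triangleq \{x \in X : f(x,y_t) \leq c'\}$ together with the concavity bound $f(x,y_t) \geq (1-t)f(x,y_1) + tf(x,y_2)$. A continuity/connectedness argument along $t \in [0,1]$, combined with the convexity of $x \mapsto f(x,y_i)$, then allows one to assemble a common point in $C_{y_1} \cap C_{y_2}$. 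The inductive step from $n-1$ to $n$ sets is routine once this base case is in hand: one restricts the whole problem to the compact convex set $C_{y_1} \cap \cdots \cap C_{y_{n-1}}$ (nonempty by induction) and applies the two-point case there with $y_n$ and a suitable auxiliary point.

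The main obstacle is plainly the two-point case, in which all four hypotheses (convexity in $x$, concavity in $y$, joint continuity of $f$, and compactness of $X$) must be used simultaneously and delicately; the remainder of the proof is essentially bookkeeping via compactness and induction. Since Sion's theorem is a classical result, in the context of the paper one would simply cite \cite{pardalos_minimax_1995} rather than reproduce this argument.
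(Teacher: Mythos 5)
The paper does not prove this statement at all: Theorem \ref{thm:sion_maxmin} is quoted as a known result, attributed to \cite{pardalos_minimax_1995}, and then merely \emph{applied} in the proof of Proposition \ref{prop:optimality_condition} (with $Y=\mathcal{U}$ and $X=\partial\rho(J(u^*))$ equipped with the weak topology, where weak-* compactness and weak continuity/convexity of the map \eqref{eq:compositeMapping} supply the hypotheses). So there is no in-paper argument to compare yours against, and your closing remark --- that in context one would simply cite the reference --- is exactly what the authors do.

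Judged on its own terms, your outline follows the standard elementary route to Sion's theorem (essentially Komiya's proof): the trivial inequality, the contradiction setup with a separating level $c$, the closed convex nonempty sublevel sets $C_y$, and the finite-intersection-property-plus-compactness conclusion. That architecture is correct, and your identification of the two-point case as the crux is also correct. But that crux is precisely where your write-up stops being a proof: ``a continuity/connectedness argument along $t\in[0,1]$ \dots allows one to assemble a common point'' is a placeholder for the entire content of the theorem. To complete it you would need, concretely: (i) a second level $c'$ with $\sup_y\inf_x f < c' < c$, and the observation that concavity in $y$ gives $f(x,y_t)\ge\min\{f(x,y_1),f(x,y_2)\}$, hence $A_t\subseteq C'_{y_1}\cup C'_{y_2}$ where $C'_{y_i}$ denotes the $c'$-sublevel set; (ii) the fact that each $A_t$ is nonempty and convex, hence connected, so under the contradiction hypothesis ($C'_{y_1}\cap C'_{y_2}\subseteq C_{y_1}\cap C_{y_2}=\emptyset$) each $A_t$ lies wholly in one of the two disjoint closed pieces, with $A_0$ in the first and $A_1$ in the second; and (iii) the closedness in $[0,1]$ of $\{t: A_t\subseteq C'_{y_1}\}$ and of its complement, which is the delicate step --- it requires playing the two (in fact three) levels against each other together with continuity of $f$ along the segment --- and which then contradicts connectedness of $[0,1]$. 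Without (i)--(iii) spelled out, the two-point case is asserted rather than proved, and the induction built on top of it inherits the gap. Since the theorem is classical and the paper itself only cites it, this level of detail is acceptable for the paper's purposes; just be aware that what you have is an accurate roadmap, not a self-contained proof.
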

In our setting, we select $Y = \mathcal{U}$, which is convex, and $X = \partial \rho(J(u^*))$, which is convex, weakly-* compact and therefore weakly compact since it is a subset of a Hilbert space. Also, by definition, the mapping
\begin{equation} \label{eq:compositeMapping}
    (\zeta,u) \mapsto \langle \espE\left[-\zeta \ \nabla J(u^*) \right] , u-u^* \rangle_\mathcal{U}
\end{equation}
is weakly continuous and concave in $u$, as well as weakly continuous and convex in $\zeta$. Therefore, Theorem \ref{thm:sion_maxmin} can be applied to equation~\eqref{eq:optimality_cond_temp2}, yielding
\begin{equation}\label{eq:opt_condition_max_zeta}
\underset{\zeta \in \partial \rho(J(u^*))}{\inf} \underset{u \in \mathcal{U}}{\sup} \ \langle \espE\left[-\zeta \ \nabla J(u^*) \right] , u-u^* \rangle_\mathcal{U} \le 0 .
\end{equation}
Since the supremum of a family of lower semi-continuous functions is also lower semi-continuous, the mapping 
\begin{equation*}
    \zeta \mapsto \underset{u \in \mathcal{U}}{\sup} \ \langle \espE\left[-\zeta \ \nabla J(u^*) \right] , u-u^* \rangle_\mathcal{U}
\end{equation*}
is lower semi-continuous, and therefore it attains a minimum which we denote by $\zeta^* \in \partial \rho(J(u^*))$. Thanks to this latter remark, from \eqref{eq:opt_condition_max_zeta} we infer that
\begin{equation*}
\langle \espE\left[\zeta^* \ \nabla J(u^*) \right] , u-u^* \rangle_\mathcal{U} \geq 0 , \quad \forall u \in \mathcal{U} ,
\end{equation*}
and the conclusion follows from the fact that $\mathcal{U}$ is a vector space.

\subsection{Proof of Lemma~\ref{lem:compacity_input_output_control_affine_case} (compacity of the output of a control affine system)}\label{proof:compacity_of_output_affine_control_and_gradient_X}

We recall we are under the hypothesis of Setting B.\\
\textbf{Proof of the compacity of $u \mapsto X_u$:}
Let $u_n$ be a sequence of controls in $\mathcal{U}$ weakly converging to $\overline{u}$. Let $p \geq 2$, we prove below the more general convergence
$$
\left\Vert X_{u_n} - X_{\overline{u}} \right\Vert^p_{C^p_\mathcal{F}} \rightarrow 0
$$
The theorem of existence of strong solutions of SDEs in \cite{yong_stochastic_1999} ensures us that, since our initial condition is deterministic, $X_u \in C^p_\mathcal{F}$.
Below, we denote by $C > 0$ a constant that will be implicitly overloaded throughout the proof. By definition, for every $n \in \mathbb{N}$ we may compute
\begin{equation*}
\begin{split}
    &\left\Vert X_{u_n} - X_{\overline{u}} \right\Vert^p_{C^p_\mathcal{F}} \leq C \espE \left[ \underset{0 \leq r \leq t}{\sup} \left\Vert  \int_0^r ( b_0(s, X_{u_n}(s)) - b_0(s, X_{\overline{u}}(s)) ) ds \right\Vert^p \right] \\ 
    & + C \espE \left[ \underset{0 \leq r \leq t}{\sup} \left\Vert \int_0^r (  \sigma(s, X_{u_n}(s)) -  \sigma(s, X_{\overline{u}}(s)) )   dW_s  \right\Vert^p   \right]   \\
    & + C \espE \left[ \underset{0 \leq r \leq t}{\sup} \left\Vert \int_0^r  \sum_{i = 1}^m \left[ (v_n)^{(i)}(s) -  (\overline{v})^{(i)}(s) \right] b_i(s, X_{\overline{u}}(s)) ds  \right\Vert^p \right]\\
    & + C \espE \left[ \underset{0 \leq r \leq t}{\sup} \left\Vert \int_0^r  \sum_{i = 1}^m \left[ K_{\lambda_n}(X_{u_n}(s))^{(i)} -  K_{\overline{\lambda}}(X_{\overline{u}}(s))^{(i)} \right] b_i(s, X_{\overline{u}}(s)) ds  \right\Vert^p   \right] \\ 
    & +  C \espE \left[ \underset{0 \leq r \leq t}{\sup} \left\Vert \int_0^r  \sum_{i = 1}^m u_n^{(i)}(s) ( b_i(s, X_{u_n}(s)) - b_i(s, X_{\overline{u}}(s)) ) ds   \right\Vert^p\right]  . 
\end{split}
\end{equation*}
Let us denote
\begin{equation*}
    h_n(t,\omega) \triangleq  \int_0^t  \sum_{i = 1}^m \left[ (v_n)^{(i)}(s) -  (\overline{v})^{(i)}(s) \right] b_i(s, X_{\overline{u}}(s,\omega)) ds.
\end{equation*}
We now appropriately bound the different integral terms. Hölder's inequality yields
\begin{equation*}
\begin{split}
\underset{0 \leq r \leq t}{\sup} & \left\Vert \int_0^r ( b_0(s, X_{u_n}(s)) - b_0(s, X_{\overline{u}}(s)) ) ds \right\Vert^p \leq \\
&\le C \int_0^t \left\Vert b_0(s, X_{u_n}(s)) - b_0(s, X_{\overline{u}}(s)) \right\Vert^p ds \\
& \leq C \int_0^t \left\Vert X_{u_n}(s) - X_{\overline{u}}(s) \right\Vert^p ds \leq C \int_0^t \underset{0 \leq s' \leq s}{\sup} \left\Vert X_{u_n}(s') - X_{\overline{u}}(s') \right\Vert^p ds .
\end{split}
\end{equation*}
Using that the functions $b_i$ are bounded and the assumptions on the feedback~\eqref{eq:hypotheses_borne_feedback_K_lambda_assumption} paired with Hölder's inequality, we have that   
\begin{align*}
    & \espE \left[ \underset{0 \leq r \leq t}{\sup} \left\Vert \int_0^r  \sum_{i = 1}^m \left[ K_{\lambda_n}(X_{u_n}(s))^{(i)} -  K_{\overline{\lambda}}(X_{\overline{u}}(s))^{(i)} \right] b_i(s, X_{\overline{u}}(s)) ds  \right\Vert^p   \right] \\
    & \leq C \espE \left[   \int_0^t  \sum_{i = 1}^m \Vert K_{\lambda_n}(X_{u_n}(s))^{(i)} -  K_{\lambda_n}(X_{\overline{u}}(s))^{(i)} \Vert^p \Vert b_i(s, X_{\overline{u}}(s)) \Vert^p ds   \right] \\
    & + C \espE \left[   \int_0^t  \sum_{i = 1}^m \Vert K_{\lambda_n}(X_{\overline{u}}(s))^{(i)} -  K_{\overline{\lambda}}(X_{\overline{u}}(s))^{(i)} \Vert^p \Vert b_i(s, X_{\overline{u}}(s)) \Vert^p ds   \right] \\
    & \leq C  \espE \left[   \int_0^t  (1+\Vert \lambda_n \Vert^p) \Vert X_{u_n}(s) -  X_{\overline{u}}(s) \Vert^p ds   \right] \\
    & + C \espE \left[   \int_0^t  (1+\Vert X_{\overline{u}}(s) \Vert^p) \Vert \lambda_n -  \overline{\lambda} \Vert^p  ds   \right].
\end{align*}

Since $\lambda_n$ converges, it is bounded. Using the fact that $X_{\overline{u}}$ is in $C^p_\mathcal{F}$, by taking the sup we finally have that
\begin{align*}
    & \espE \left[ \underset{0 \leq r \leq t}{\sup} \left\Vert \int_0^r  \sum_{i = 1}^m \left[ K_{\lambda_n}(X_{u_n}(s))^{(i)} -  K_{\overline{\lambda}}(X_{\overline{u}}(s))^{(i)} \right] b_i(s, X_{\overline{u}}(s)) ds  \right\Vert^p   \right] \\
    & \leq C  \espE \left[   \int_0^t \underset{0 \leq s' \leq s}{\sup}  \Vert X_{u_n}(s') -  X_{\overline{u}}(s') \Vert^p ds   \right] + C \Vert \lambda_n -  \overline{\lambda} \Vert^p .
\end{align*}

For the stochastic integral, thanks to Burkholder-Davis-Gundy inequality~\eqref{eq:Burkhloder_Davis_Gundy_ineq_stoch_int} combined with Hölder we may compute
\begin{equation*}
\begin{split}
&\espE \left[ \underset{0 \leq r \leq t}{\sup} \left\Vert \int_0^r (  \sigma(s, X_{u_n}(s)) -  \sigma(s, X_{\overline{u}}(s)) )   dW_s  \right\Vert^p \right] \leq \\
&\le C \espE \left[ \int_0^t \left\Vert \sigma(s, X_{u_n}(s)) -  \sigma(s, X_{\overline{u}}(s)) \right\Vert^2 ds \right]^{\frac{p}{2}} \\
& \leq C \espE \left[ \int_0^t \left\Vert X_{u_n}(s) - X_{\overline{u}}(s) \right\Vert^p ds \right] \leq C \int_0^t \espE \left[ \underset{0 \leq s' \leq s}{\sup} \left\Vert X_{u_n}(s') - X_{\overline{u}}(s') \right\Vert^p \right] ds .
\end{split}
\end{equation*}
Gathering all the previous bounds yields 
\begin{equation*}
\begin{split}
    \espE \left[ \underset{0 \leq s \leq t}{\sup} \left\Vert X_{u_n}(s) - X_{\overline{u}}(s) \right\Vert^p  \right] & \leq C \int_0^t \espE \left[ \underset{0 \leq s' \leq s}{\sup} \left\Vert X_{u_n}(s') - X_{\overline{u}}(s') \right\Vert^p \right] ds \\
    & + C \left\Vert  \lambda_n -  \overline{\lambda} \right\Vert^p + C \espE \left[ \underset{0 \leq s \leq t}{\sup} \Vert h_n(s,\omega) \Vert^p \right] ,
\end{split}
\end{equation*}
and an application of Grönwall inequality allows us to infer that
\begin{equation*}
\espE \left[ \underset{0 \leq s \leq t}{\textrm{\normalfont sup}} \left\Vert X_{u_n}(s) - X_{\overline{u}}(s) \right\Vert^p \right] \leq C \left[ \espE \left[ \underset{0 \leq s \leq t}{\sup} \Vert h_n(s,\omega) \Vert^p \right] + \left\Vert  \lambda_n -  \overline{\lambda} \right\Vert^p \right].
\end{equation*} 
To conclude, we now need to prove that $\espE \left[ \underset{0 \leq s \leq t}{\sup} \Vert h_n(s,\omega) \Vert^p \right]$ tends to 0. The following lemma gives us the convergence of $\underset{0 \leq s \leq t}{\textrm{\normalfont sup}} \Vert h_n(s,\omega) \Vert^p$ for $\omega$ fixed in $\Omega$ (see \cite{trelat_controoptimal_2005} for proof).

\begin{lemma}\label{lem:uniform_convergence_holder}
Let $a,b \in \mathbf{R}$ and let $E$ be a normed vector space. For all $n \in \mathbf{N}$, let $f_n : [a,b] \to E$ be uniformly $\alpha$-Hölder, that is
\begin{equation}\label{eq:unif_holder_compacity}
\exists \alpha, K > 0, \quad \forall n \in \mathbf{N}, \quad \forall x,y \in [a,b] \quad \Vert f_n(x) - f_n(y) \Vert \leq K \Vert x - y \Vert^\alpha
\end{equation}
If the sequence $f_n$ converges simply to an application $f$, then it converges uniformly.
\end{lemma}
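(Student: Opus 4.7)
The plan is to run a standard $\varepsilon/3$ argument, exploiting the uniform H\"older bound to obtain equicontinuity of the family $(f_n)$ together with continuity of the pointwise limit $f$, and then using compactness of $[a,b]$ to reduce uniform control to control at finitely many points.

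First I would transfer the H\"older estimate to the limit. Fix $x,y \in [a,b]$; passing to the limit in $\Vert f_n(x) - f_n(y) \Vert \le K |x-y|^\alpha$ and using pointwise convergence gives $\Vert f(x) - f(y) \Vert \le K |x-y|^\alpha$, so $f$ itself is $\alpha$-H\"older with the same constant. Given $\varepsilon > 0$, choose $\delta = \bigl(\varepsilon/(3K)\bigr)^{1/\alpha}$, so that
\begin{equation*}
|x-y| \le \delta \quad \Longrightarrow \quad \Vert f_n(x) - f_n(y)\Vert \le \tfrac{\varepsilon}{3} \quad \text{and} \quad \Vert f(x) - f(y)\Vert \le \tfrac{\varepsilon}{3},
\end{equation*}
uniformly in $n$.

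Next, by compactness of $[a,b]$, I would cover it by finitely many points $x_1, \dots, x_N$ such that every $x \in [a,b]$ lies within $\delta$ of some $x_i$. Pointwise convergence at each of these finitely many points provides an index $N^* \in \mathbb{N}$ such that $\Vert f_n(x_i) - f(x_i) \Vert \le \varepsilon/3$ for every $i = 1, \dots, N$ and every $n \ge N^*$.

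Finally, for any $x \in [a,b]$ and $n \ge N^*$, pick $x_i$ with $|x - x_i| \le \delta$ and apply the triangle inequality:
\begin{equation*}
\Vert f_n(x) - f(x) \Vert \le \Vert f_n(x) - f_n(x_i) \Vert + \Vert f_n(x_i) - f(x_i) \Vert + \Vert f(x_i) - f(x) \Vert \le \varepsilon,
\end{equation*}
which is uniform in $x$ and yields the claim. There is no genuine obstacle here: the only point that requires a moment of care is making sure $\delta$, $N$, and the cover are all chosen \emph{before} invoking pointwise convergence, so that the finite index $N^*$ indeed works uniformly in $x$.
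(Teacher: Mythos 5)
Your proof is correct and complete: the paper itself does not prove this lemma but simply defers to the reference \cite{trelat_controoptimal_2005}, and your $\varepsilon/3$ argument (equi-H\"older continuity passed to the limit, a finite $\delta$-net of the compact interval $[a,b]$, pointwise convergence at the finitely many net points, then the triangle inequality) is exactly the standard proof of this classical fact. Your closing remark about fixing $\delta$ and the net before invoking pointwise convergence is the right point of care, and you handle it correctly.
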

For $\omega \in \Omega$, $h_n(s,\omega)$ converges to 0 by weak convergence of $u_n$. Let us recall that the functions $b_i$ are bounded by assumption. We can now show that the sequence $h_n(\cdot,\omega)$ is uniformly Hölder thanks to Cauchy Schwartz and to the fact that $v_n$ is uniformly bounded in $L^2(0,T)$ : 

\begin{equation*}
\begin{split}
|  h_n(t_2, \omega) - & h_n(t_1, \omega)|  = \left| \int_{t_1}^{t_2} \sum_{i = 1}^m \left[ (v_n)^{(i)}(s) -  (\overline{v})^{(i)}(s) \right] b_i(s, X_{\overline{u}}(s,\omega)) ds \right| \\
& \leq  C \sum_{i = 1}^m \left[ \left( \int_{t_1}^{t_2} (|(v_n)^{(i)}(s)|^2 +  |(\overline{v})^{(i)}(s)|^2) ds\right)^{\frac{1}{2}} \left( \int_{t_1}^{t_2} \Vert b_i \Vert_\infty ds \right)^{\frac{1}{2}}  \right]   \hspace{3em} \\
                    & \leq C |t_2 - t_1|^{\frac{1}{2}}   \hspace{3em} 
\end{split}
\end{equation*} 
Using Lemma \ref{lem:uniform_convergence_holder}, we have that sequence of random variables $g_n(\omega) := \underset{0 \leq s \leq t}{ \sup } \Vert h_n(s,\omega) \Vert^p$ converges pointwise to 0. The random variable $g_n$ can be uniformly bounded by the deterministic value
\begin{equation*}
\begin{split}
|g_n(\omega)| & \leq \left( Tm \int_0^T  \sum_{i = 1}^m (M_u +   \Vert (\overline{v})^{(i)}(s) \Vert^2) \Vert b_i \Vert^2_\infty \right)^{\frac{p}{2}}
\end{split}
\end{equation*}
with $M_u$ a uniform bound of the sequence of controls. We therefore have by dominated convergence that $\espE [ g_n ]$ tends to 0 
, which in turn gives 
\begin{equation*}
\begin{split}
\Vert X_{u_n} - X_{\overline{u}} \Vert_{C_\mathcal{F}^p}^p \rightarrow 0.
\end{split}
\end{equation*}

\vspace{2em}

\textbf{Proof of the compacity of $u \mapsto \nabla_u J_X(X_u)$:} 

For more clarity, let us consider a cost where $g = 0$. Let $u$ be a control in $\mathcal{U}$, we can write the gradient as 
\begin{equation*}
    \nabla_u J_X(X_u)^{(i)}(t) = \left[  \int_t^T \partial_x f(s,X_u(s)) \phi_u(s) ds   \right] \phi_u^{-1}(t) \partial_x b_i(t,X_u(t)) 
\end{equation*}
with $\phi_u$ satisfying for $t \in [0,T]$ the linear system
\begin{equation}\label{eq:linearized_sde_phi_comp_proof_affine}
\begin{split}
\left\{
    \begin{array}{ll}
    d\phi_u(t) & = \left[ \partial_x b_0(t,X_u(t)) + \sum_{i = 1}^m u_i(t) \partial_x b_i(t,X_u(t)) \right] \phi_u(t) dt \\
    & \quad + \left[ \sum_{i = 1}^m b_i(t,X_u(t)) \right] K_{\lambda_u}'(X_u(t)) \phi_u(t) dt \\
    & \quad + \partial_x \sigma(t,X_u(t)) \phi_u(t) dW_t, \\
    \phi(0) & = Id_N.
    \end{array}
    \right.
\end{split}
\end{equation}
Since functions $b_i, \partial_x b_i, \partial_x \sigma_i$ and $K_\lambda'$ are all uniformly bounded, we can prove in a similar way as previously the compacity of $u \mapsto \phi_u$ in $C_\mathcal{F}^p$. We can also prove the compacity of $u \mapsto \phi^{-1}_{u}$ as it satisfies a similar SDE.

We then conclude using the Cauchy-Schwartz inequality twice on the expectation of the four terms and obtaining
\begin{equation*}
\begin{split}
    \espE & \left[ \underset{0 \leq t \leq T}{\sup} \Vert \nabla_u J_X(X_u)^{(i)}(t) - \nabla_u J_X(X_{u_n})^{(i)}(t) \Vert^2 \right] \le \\
    & \ C \left(  \int_0^T \espE \left[ \Vert \partial_x f(s,X_{\overline{u}}(s)) - \partial_x f(s,X_{u_n}(s)) \Vert^8 \right] ds \right) \Vert \phi_{\overline{u}} \Vert_{C_\mathcal{F}^8}  \Vert \phi_{\overline{u}}^{-1} \Vert_{C_\mathcal{F}^8} \Vert \partial_x b_i \Vert^8_\infty \\
    & + C \Vert \partial_x f(\cdot,X_{u_n}(\cdot)) \Vert_{C_\mathcal{F}^8} \Vert \phi_{\overline{u}} - \phi_{u_n} \Vert_{C_\mathcal{F}^8}  \Vert \phi_{\overline{u}}^{-1} \Vert_{C_\mathcal{F}^8} \Vert \partial_x b_i \Vert^8_\infty \\
    & + C  \Vert \partial_x f(\cdot,X_{u_n}(\cdot)) \Vert_{C_\mathcal{F}^8} \Vert \phi_{u_n} \Vert_{C_\mathcal{F}^8} \Vert \phi_{\overline{u}}^{-1} - \phi_{u_n}^{-1} \Vert_{C_\mathcal{F}^8} \Vert \partial_x b_i \Vert^8_\infty \\
    & + C \Vert \partial_x f(\cdot,X_{u_n}(\cdot)) \Vert_{C_\mathcal{F}^8} \Vert \phi_{u_n} \Vert_{C_\mathcal{F}^8}  \Vert \phi_{u_n}^{-1} \Vert_{C_\mathcal{F}^8} \Vert \partial_x b_i(\cdot,X_{\overline{u}}(\cdot)) -  \partial_x b_i(\cdot,X_{u_n}(\cdot)) \Vert_{C_\mathcal{F}^8}
\end{split}
\end{equation*}
The lipschitzianity of $\partial_x f$ ensures the convergence of the first term to 0 and implies that
$$
\Vert \partial_x f(s,x) \Vert \leq C ( 1 + \Vert x \Vert)
$$
which in turn yields
$$
\Vert \partial_x f(\cdot,X_{u_n}(\cdot)) \Vert_{C_\mathcal{F}^8} \leq C ( 1 + \Vert X_{u_n} \Vert_{C_\mathcal{F}^8}),
$$
which is bounded thanks to the convergence of the sequence $X_{u_n}$. 
Finally, we conclude with the lipschitzianity of $\partial_x b_i$ which implies that the last term tends to 0.
We therefore have
\begin{equation*}
\Vert \nabla_u (J_X(X_{u_n})) - \nabla_u (J_X(X_{\overline{u}})) \Vert^2_{C^2_\mathcal{F}}  \rightarrow 0.
\end{equation*}

 \bibliographystyle{ieeetr}
 \bibliography{biblio_finale}






\end{document}